\theoremstyle{plain}
\newtheorem{lemma}[equation]{Lemma} 
\newtheorem{theorem}[equation]{Theorem}
\theoremstyle{definition}
\newtheorem{definition}[equation]{Definition} 
\newtheorem{example}[equation]{Example}
\theoremstyle{remark}
\newtheorem{remark}[equation]{Remark}
\newcommand\R{\mathbb{R}}
\newcommand\dd{\delta}
\newcommand\ee{\varepsilon}
\numberwithin{equation}{section}
\title[A simple proof of the sharp weighted estimate . . .]{A simple proof of the sharp weighted estimate for Calderon-Zygmund operators on homogeneous spaces}
\author[T.C.~Anderson]{Theresa C. Anderson}
\author[A.~Vagharshakyan]{Armen Vagharshakyan}
\begin{document}
\maketitle
\begin{abstract}
Here we show that Lerner's method of local mean oscillation gives a simple proof of the $A_2$ conjecture for spaces of homogeneous type: that is, the linear dependence on the $A_2$ norm for weighted $L^2$ Calderon-Zygmund operator estimates.  In the Euclidean case, the result is due to Hyt\"{o}nen, and for geometrically doubling spaces, Nazarov, Rezinikov, and Volberg obtained the linear bound.
\end{abstract}

\section{Introduction}

Weighted norm estimates for singular integral operators in Euclidean space and, more generally, spaces of homogeneous type are classical results. More recently, a variety of methods
have been developed which yield the sharp (linear) dependence of norm bounds for such operators with respect to the Muckenhoupt $A_2$ weight class. In \cite{H}, Hyt\"{o}nen gave the first complete proof of these linear bounds, the so-called 
$A_2$ conjecture, and in \cite{NRV}, the $A_2$ conjecture was proven for geometric doubling spaces by adapting the technology of random dyadic grids.  Subsequently, an extremely elegant and different approach due to Lerner yielded another proof of the conjecture in the Euclidean setting \cite{L2}. 

Spaces of homogeneous type admit a dyadic structure. This was first demonstrated in \cite{C} and later amplified in \cite{HK}, where some additional structure is proven.  For a discussion of homogeneous spaces, see the classical paper by Coifman and Weiss \cite{CW}. 
We show here that Lerner's simple approach to the $A_2$ conjecture also works in the setting of spaces of homogeneous type.  The main tool that is necessary to carry out this approach is the additional dyadic structure developed in \cite{HK}, specifically a version of T. Mei's family of dyadic spaces whereby any ball can be covered by some ``dyadic cube" of roughly the same size. One main contribution of this article is to provide a self-contained, streamlined argument;  all of the essential properties about median and oscillation required to prove this theorem are set forth.  These objects were defined and their properties appear in several papers of Lerner on this and related subjects.  Our objective then is to give a simple proof of this theorem:

\begin{theorem}
\label{maintheorem}
Let $T$ be a Calderon-Zygmund operator and $X$ a homogenous space.  Then for any $w\in A_2$,
\[\|T\|_{L^2(w)}\leq C(T,X)[w]_{A_2}.\]
\end{theorem}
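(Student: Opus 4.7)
The plan is to follow Lerner's local mean oscillation approach from \cite{L2}, adapted to a homogeneous space $X$. The first preparation is to invoke the Hyt\"onen--Kairema construction recalled in the introduction: there is a finite collection of adjacent dyadic systems $\{\D^t\}$ on $X$ with the property that every ball is contained in some cube $Q \in \D^t$ of comparable diameter. Since the collection is finite and the CZ operator acts independently of which grid we choose, it suffices to prove the theorem for a single fixed $\D$, at the cost of an absolute constant.

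Next I would set up Lerner's machinery by recording the definitions and elementary properties of the median $m_g(Q)$ and the $\lambda$-local mean oscillation $\omega_\lambda(g; Q)$, including the weak-$L^1$ comparison and the behavior under truncations; these are exactly the ``essential properties'' the introduction promises to collect. The key structural statement is Lerner's decomposition: for any measurable $g$ and any cube $Q_0$, a Calder\'on--Zygmund stopping-time construction on the level sets of the oscillation produces a sparse family $\mathcal{S} \subset \D(Q_0)$ such that
\begin{equation*}
|g(x) - m_g(Q_0)| \leq C \sum_{Q \in \mathcal{S}} \omega_\lambda(g; Q)\, \mathbf{1}_Q(x) \qquad \text{for a.e. } x \in Q_0.
\end{equation*}
The proof uses only doubling of the ambient measure and so transfers verbatim to $X$.

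Applied with $g = Tf$, the problem reduces to controlling $\omega_\lambda(Tf; Q)$ by averages of $|f|$. Using the Hyt\"onen--Kairema ball $B_Q \supset Q$ with $\mathrm{diam}(B_Q) \lesssim \mathrm{diam}(Q)$, split $f = f \mathbf{1}_{C B_Q} + f \mathbf{1}_{X \setminus C B_Q}$: the local piece is handled by the weak-$(1,1)$ bound of $T$, and the far piece by the H\"older kernel bound applied to a geometric decomposition of $X\setminus CB_Q$ into doubling annuli. Both contributions are dominated by $\langle |f|\rangle_{\widetilde Q}$ for a fixed dilate $\widetilde Q$. Combining with Lerner's decomposition and letting $Q_0$ exhaust $X$ yields the pointwise domination
\begin{equation*}
|Tf(x)| \leq C \sum_{Q \in \mathcal{S}} \langle |f|\rangle_{\widetilde Q}\, \mathbf{1}_Q(x).
\end{equation*}

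The final step is the sparse $A_2$ bound for the right-hand side. This I would prove by duality: testing against $g \in L^2(w^{-1})$, expanding, and applying Carleson's embedding theorem twice, once with the measure $w\, d\mu$ and once with $w^{-1}\,d\mu$, using the sparseness to verify the Carleson condition and the $A_2$ hypothesis to bound the Carleson constants by $[w]_{A_2}^{1/2}$ apiece. The main obstacle I expect is the oscillation estimate for $Tf$: in the Euclidean setting one integrates the kernel against $f$ over a clean sequence of dyadic annuli, while on $X$ one must replace a dyadic cube by a comparable ball and then carefully sum the kernel estimates against the doubling measure using the HK comparison. Once that geometric estimate is in place, the remainder of the proof is essentially formal.
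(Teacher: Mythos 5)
Your broad outline matches the paper's: Hyt\"onen--Kairema adjacent dyadic systems, the median and local mean oscillation $\omega_\lambda$, Lerner's sparse stopping-time decomposition, the split of $f$ into a near piece handled by weak-$(1,1)$ boundedness of $T$ and a far piece handled by the kernel regularity, and a duality-plus-Carleson argument for the resulting sparse form. The gap is in the step you describe as producing a single fixed dilate. When you estimate the far piece of $\omega_{\ee/4}(|Tf - m(Tf,Q)|, Q)$ by the H\"older kernel bound summed over doubling annuli, you do not obtain $\fint_{\widetilde Q}|f|$; you obtain the geometrically weighted series $\sum_{l\geq 1} 2^{-l\eta}\fint_{2^l Q}|f|$ with balls of unboundedly growing radius (this is Lemma \ref{linearization} in the paper). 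That series cannot in general be collapsed onto one average over a fixed dilate: if $f$ is supported far from $Q$ the low-$l$ averages vanish while the tail does not, and the doubling hypothesis gives an \emph{upper} bound on $|2^l Q|/|Q|$ but no lower bound with which to dominate $\fint_{2^l Q}|f|$ by $\fint_{\widetilde Q}|f|$. So the pointwise sparse domination you write at the end, with a single $\widetilde Q$, is not established by the argument you sketch (and is not what Lerner's \cite{L2} argument produces either), and the subsequent Carleson step is not yet in a position to run.

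The resolution is the technical heart of the paper and occupies all of Section 6. One keeps the sum over $l$ and is led to the shifted operators
\begin{equation*}
A_k f = \sum_{Q\in S(Q_0)}\left(\fint_{2^k Q}|f|\right)\chi_Q,\qquad k=1,2,\dots,
\end{equation*}
which must be bounded on $L^2(w)$ \emph{linearly} in both $[w]_{A_2}$ and the complexity $k$, so that the decay $2^{-k\eta}$ can absorb the linear growth in $k$. The dilated balls $2^k Q$ are not themselves sparse --- they overlap heavily --- so your Carleson argument cannot be applied to $A_k$ directly. The paper therefore invokes the Hyt\"onen--Kairema covering lemma a second time to replace $2^k Q$ by a genuine dyadic cube $Q^*\in S^j$ from one of the $J$ adjacent systems, giving the operators $B_{j,k}$ of \eqref{bidefinition}; proves a weak-$(1,1)$ bound for the adjoint $B^*$ with constant $\beta k$ via a Calder\'on--Zygmund decomposition whose key geometric observation is that at any fixed $x\in\Omega^c$ only about $k$ generations of $Q$ inside a given Whitney cube $W_i$ can contribute to $B^*b(x)$ (Lemma \ref{discreteoperatornorm}, steps 3--4, flagged by the authors as the new part); deduces $\omega_{\ee/4}(|B^*f - m(B^*f)|,Q_1)\leq D_6\, k\fint_{Q_1}|f|$ for $Q_1\in S^j$ (Lemma \ref{biw}); and then applies Lerner's decomposition a \emph{second} time --- now to $B^*f$ rather than to $Tf$ --- to finally arrive at a genuine sparse form. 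Only at that point does the weighted-maximal-function argument (the paper's counterpart to your Carleson step, Lemma \ref{lernerhasit} via $M_w$ and $M_{w^{-1}}$, together with Lemma \ref{howmedianshouldbetreated} for the median term) close the estimate. Your proposal is missing this entire layer of complexity management, which is precisely the nontrivial content of adapting Lerner's Euclidean proof to the homogeneous setting.
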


The simplicity of our arguments comes from a version of Lerner's local mean oscillation decomposition.  This concept apppeared in the work of \cite{J} and \cite{St} and has gained popularity through its development in several papers of Lerner \cite{L1},\cite{L2},\cite{L3}.  The technique requires very few assumptions on the measurable space; in section 2 we collect all the properties of homogeneous spaces used in our proof. 

The strategy of the proof is to adapt Lerner's elegant maximal decomposition of a Calderson-Zygmund operator, together with the additonal dyadic structure provided by Mei's lemma, which was shown in \cite{HK}.  The proof we give is completely self contained.  The paper is organized in the following way: relying on few \emph{a priori} assumptions, we begin section 3 with a review of the median and its generalization.  Homogeneous spaces are introduced in section 4, where we adapt a dilation structure to the dyadic grids of honogeneous spaces, leading to our maximal decomposition.  We discretize the operator in section 5.  Finally, section 6 is devoted to proving \ref{maintheorem} by showing a linear $A_2$ bound for an operator from our maximal decomposition.  The structure of the proof is as in Lerner \cite{L2}, but steps 3 and 4 of lemma \ref{discreteoperatornorm} and lemma \ref{howmedianshouldbetreated} are new.  We finally remark that all absolute constants in this paper depend only on the space $X$ and the operator $T$.

Earlier important contributions to this subject can be found in: \cite{Armen}, \cite{B}, \cite{Buckley}, \cite{CMP1}, \cite{CMP2}, \cite{H}, \cite{H2}, \cite{H3}, \cite{JT}, \cite{LPR}, \cite{NRV}, \cite{P1}, \cite{P2}, \cite{PTV}, and \cite{PV}.

\section{Background and definitions}
\begin{definition}\label{homogeneous}
 Following \cite{C} we define a homogeneous space as a triple
$(X, \rho, |\cdot|)$ where $X$ is a set, $\rho$ is a quasimetric, that is:
\[1.\;\rho(x,y)=0 \text{ if and only if } x=y,\] 
\[2. \;\rho(x,y)=\rho(y,x) \text{ fo all } x,y\in X,\] 
\[3. \;\rho(x,z) \leq C_0(\rho(x,y)+\rho(y,z)) \text{ for all }x,y,z\in X\] for some absolute constant  $C_0>0$ (quasitriangle inequality),
and the positive measure $|\cdot|$ is \emph{doubling}:  
\[0<|B(x_0,2r)|\leq C_d|B(x_0,r)|<+\infty\] for some absolute constant $C_d$.
\end{definition}
\begin{definition}\label{calderon}
We'll say that $K:X\times X\setminus\lbrace{x=y\rbrace}\to R$ is a Calderon-Zygmund kernel if there exist $\eta>0$ and $C<\infty$ such that for all $x_0\neq y\in X$ and $x\in X$ it satisfies the decay condition:
\begin{equation}
\label{decay}
|K(x_0,y)|\leq \frac{C}{|B(x_0,\rho(x_0,y))|}
\end{equation}
and the smoothness condition for $\rho(x_0,x)\leq \eta\rho(x_0,y)$:
\begin{equation}
\label{smoothness}
|K(x,y)-K(x_0,y)|\leq\left(\frac{\rho(x,x_0)}{\rho(x_0,y)}\right)^{\eta}\frac{1}{|B(x_0,\rho(x_0,y))|},
\end{equation}
\begin{equation*}
|K(y,x)-K(y,x_0)|\leq\left(\frac{\rho(x,x_0)}{\rho(x_0,y)}\right)^{\eta}\frac{1}{|B(x_0,\rho(x_0,y))|}.
\end{equation*}
\end{definition}
\begin{definition}
Let $T$ be a singular integral operator associated to Calderon-Zygmund kernel $K$. If in addition $T$ is $L^2$ bounded, we say that $T$ is a Calderon-Zygmund operator.
\end{definition}

As mentioned, the $A_2$ theorem on homogeneous spaces involves the extension of the definition of $A_p$ weight classes to include homogeneous situations.
\begin{definition}
Let $w$ be a weight (a nonnegative function locally integrable on X).  We say that $w\in A_2$ if \[\sup_{r>0}\fint_{B(x,r)}\omega(\fint_{B(x,r)}\omega^{-1}) = [\omega]_{A_2} < \infty. \] where the measure is the homogeneous doubling measure.  We call $[\omega]_{A_2}$ is the $A_2$ weight characteristic.
\end{definition}
Note that if $w\in A_2$ (with respect to balls), then $w\in A_2$ with respect to cubes (see remark \ref{a2dyadic}).

Now we list some basic background results for homogeneous spaces that we appeal to later on.  The main construction appears later.
\begin{theorem}
\cite{MS} We can define a new quasimetric, $\rho '$, induced by the topology of the balls of $X$.  If all balls are open, then $\rho$ is equivalent to $\rho '$.
\end{theorem}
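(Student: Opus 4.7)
My plan is to construct $\rho'$ directly from the topology generated by the balls, then show that when the balls of $\rho$ are already open, the construction recovers $\rho$ up to equivalence.

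First, let $\tau$ denote the topology on $X$ generated by taking the collection $\{B(x,r) : x \in X,\ r > 0\}$ as a subbasis. Because $\rho$ is only a quasimetric, an individual ball $B(x,r)$ need not lie in $\tau$ as an open set. For $x \ne y$ in $X$, define
\[
\rho'(x,y) \;=\; \inf\bigl\{\,r > 0 \;:\; y \in \mathrm{int}_\tau B(x,r)\bigr\},
\]
and set $\rho'(x,x) = 0$. If symmetry fails, I replace this by $\max(\rho'(x,y),\rho'(y,x))$. This is the natural quasimetric "seen" by the topology.

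Second, I would verify that $\rho'$ satisfies the three quasimetric axioms. Positive-definiteness comes from the Hausdorff-ness of $\tau$ (which follows from the quasi-triangle inequality for $\rho$: two distinct points can be separated by suitably nested balls). Symmetry is built in by the symmetrization. For the quasi-triangle inequality, the key observation is that if $y \in \mathrm{int}_\tau B(x,r)$ and $z \in \mathrm{int}_\tau B(y,s)$, then using $B(y,s)\subset B(x,C_0(r+s))$ and monotonicity of interior under inclusion one obtains $z\in \mathrm{int}_\tau B(x, C_0(r+s))$, yielding $\rho'(x,z)\le C_0(\rho'(x,y)+\rho'(y,z))$ (possibly with a slightly larger constant after symmetrization).

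Third, I would deduce the conditional conclusion. If every $\rho$-ball is $\tau$-open, then $\mathrm{int}_\tau B(x,r) = B(x,r)$ for all $x$ and $r$, so
\[
\rho'(x,y) \;=\; \inf\{\,r > 0 \,:\, y \in B(x,r)\,\} \;=\; \inf\{\,r > 0 \,:\, \rho(x,y) < r\,\} \;=\; \rho(x,y),
\]
and the two quasimetrics coincide (in particular are equivalent).

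The main obstacle is the quasi-triangle inequality for $\rho'$: because the interior operation does not preserve radii exactly, one must track how much the constant $C_0$ is inflated by iterating the topological construction. A standard remedy is to allow $\rho'$ to be comparable to, rather than equal to, the naive infimum, or to pass through a chain-type argument to absorb the loss. A secondary technical point is confirming that the infimum defining $\rho'(x,y)$ is always finite, for which one needs to check that, for any pair of points, some $\tau$-open set containing $x$ is actually contained in $B(x,r)$ for $r$ large — this reduces to a standard base argument using the subbase $\{B(x,r)\}$ of $\tau$.
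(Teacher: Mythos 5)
The paper never proves this statement; it is quoted as background from Mac\'ias--Segovia \cite{MS}, so there is no internal argument to measure yours against, and the attempt has to stand on its own. It does not, and the problem starts at the first step. You declare $\tau$ to be the topology generated by the balls taken as a subbasis, and then assert that an individual ball need not be $\tau$-open; but every subbasis element is open in the topology it generates, so under your own definition $\operatorname{int}_\tau B(x,r)=B(x,r)$ for all $x,r$, hence $\rho'(x,y)=\inf\{r>0:\rho(x,y)<r\}=\rho(x,y)$ and the construction collapses to $\rho$ outright. The topology you presumably intend is the usual quasimetric topology ($U$ open iff every point of $U$ contains some ball $B(y,s)\subset U$), in which balls indeed need not be open. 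But with that repair the construction proves too little in a different way: if $r>C_0(\rho(x,y)+s)$ then $B(y,s)\subset B(x,r)$, and the set of points possessing a ball inside $B(y,s)$ is open and contains $y$, so $y\in\operatorname{int}_\tau B(x,r)$; letting $s\to 0$ gives $\rho(x,y)\le\rho'(x,y)\le C_0\,\rho(x,y)$ \emph{unconditionally}. Thus your $\rho'$ is comparable to $\rho$ whether or not the balls are open, the hypothesis ``all balls are open'' does no work, and nothing new has been produced.

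This is the real gap: the point of the cited theorem is that one can manufacture a quasimetric $\rho'$ determined by the topology (in \cite{MS} in effect by the measure, $\rho'(x,y)\sim\inf\{|B|: x,y\in B\}$, or via the metrization argument showing $\rho^{\alpha}$ is comparable to a metric for small $\alpha$) which enjoys genuinely better properties than $\rho$ --- its balls are open and it satisfies a H\"older-type regularity --- and which is equivalent to $\rho$ when the $\rho$-balls are already open. Your $\rho'$ is never shown to have open balls or any regularity beyond what $\rho$ already has, so what you verify is essentially vacuous rather than the quoted result. Note also that the two technical worries you flag are not the obstruction: finiteness of the infimum and the feared inflation of the quasi-triangle constant are both settled by the $C_0(\rho(x,y)+s)$ computation above. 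What is missing is the actual construction (the measure-based quasidistance or a Frink-type chain/metrization argument) that delivers openness of the new balls.
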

\begin{theorem}
\cite{C} The Hardy-Littlewood maximal functional $M(f) = \sup_{B}\fint_B|f(x)|$ for balls $B(x,r)$ is  weak (1,1), strong (2,2), and bounded in $L^2(w)$. 
\end{theorem}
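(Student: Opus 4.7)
My plan is to establish the three assertions in sequence, first using a quasimetric Vitali covering and doubling to obtain the unweighted bounds, and then combining these ingredients with the $A_2$ hypothesis for the weighted bound.

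For weak $(1,1)$, fix $\lambda>0$ and set $E_\lambda=\{Mf>\lambda\}$. For each $x\in E_\lambda$ I would select a ball $B_x$ with $\fint_{B_x}|f|>\lambda$; this inequality forces $|B_x|<\|f\|_1/\lambda$, so the radii are bounded. A greedy selection on the radii (the quasimetric analogue of the classical Vitali $5r$-covering lemma) then produces a pairwise disjoint subfamily $\{B_j\}$ and a dilation constant $K=K(C_0)$ such that $E_\lambda\subset\bigcup_j KB_j$. The doubling hypothesis gives $|KB_j|\leq C|B_j|$ with $C=C(C_0,C_d)$, and summing yields
\[
|E_\lambda|\leq C\sum_j|B_j|\leq \frac{C}{\lambda}\sum_j\int_{B_j}|f|\leq \frac{C}{\lambda}\|f\|_1.
\]
The strong $(2,2)$ bound follows immediately from Marcinkiewicz interpolation between this weak $(1,1)$ estimate and the trivial pointwise inequality $\|Mf\|_\infty\leq\|f\|_\infty$.

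For the $L^2(w)$ bound with $w\in A_2$, I would repeat the Vitali extraction at level $\lambda$, but apply Cauchy--Schwarz on each selected ball:
\[
\lambda|B_j|\leq \int_{B_j}|f|\leq \Big(\int_{B_j}|f|^2 w\Big)^{1/2}\Big(\int_{B_j} w^{-1}\Big)^{1/2}.
\]
Squaring, multiplying by $w(B_j)/|B_j|^2$, and invoking the $A_2$ identity $w(B_j)\int_{B_j}w^{-1}\leq [w]_{A_2}|B_j|^2$ produces the weighted weak $(2,2)$ bound $w(E_\lambda)\lesssim [w]_{A_2}\lambda^{-2}\|f\|_{L^2(w)}^2$. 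To upgrade weak type to strong type I would invoke the openness of the $A_p$ scale: $w\in A_2$ forces $w\in A_{2-\ee}$ for some $\ee=\ee([w]_{A_2})>0$, a fact that in the homogeneous setting follows from a reverse H\"older inequality proved via a Calder\'on--Zygmund decomposition of $w$ on the same Vitali grid. The Cauchy--Schwarz argument repeated with exponent $2-\ee$ then yields weak $(2-\ee,2-\ee)$ for $M$ on $L^{2-\ee}(w)$, and Marcinkiewicz interpolation with the trivial $L^\infty$ bound delivers the desired strong $L^2(w)$ estimate.

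The principal technical obstacle is the Vitali extraction in step one: without a genuine triangle inequality the dilation $K$ depends nontrivially on $C_0$, and one must truncate the radii by a finite cap to run the greedy algorithm before passing to the supremum. Once this covering is in hand, Marcinkiewicz interpolation, the reverse H\"older inequality, and the weighted weak-type bound all adapt directly from their Euclidean counterparts using only doubling and the covering itself.
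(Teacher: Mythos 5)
The paper never proves this statement: it is quoted as known background with a citation to Christ's lecture notes (the result goes back to Calder\'on and Coifman--Weiss for spaces of homogeneous type), so there is no internal argument to compare yours against. Your sketch is the standard proof and is essentially sound: a quasimetric Vitali/$5r$-type covering with dilation constant depending on $C_0$ plus doubling gives weak $(1,1)$, Marcinkiewicz interpolation with the trivial $L^\infty$ bound gives strong $(2,2)$, and the classical Muckenhoupt route (weighted weak $(2,2)$ from Cauchy--Schwarz and the $A_2$ condition, self-improvement $A_2\subset A_{2-\ee}$ via reverse H\"older, then interpolation) gives boundedness on $L^2(w)$; this is precisely Calder\'on's theorem on maximal functions relative to a doubling measure. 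Three points to tighten. First, the inequality $|B_x|<\|f\|_1/\lambda$ controls the \emph{measure} of the selected balls, not their radii, so it does not by itself justify the greedy selection (consider a space of finite measure); the truncation of radii that you mention at the end is what actually makes that step legitimate, so it should replace, not supplement, the false inference. Second, in the weighted weak-type step you pass from $w(E_\lambda)$ to $\sum_j w(KB_j)$, which requires $w(KB_j)\lesssim w(B_j)$; this uses that $A_2$ weights are doubling (with constant depending on $[w]_{A_2}$ and $C_d$) and should be stated. Third, the reverse H\"older inequality in general spaces of homogeneous type is itself a nontrivial ingredient and may only hold in a weak form with a dilated ball on the right-hand side, but any standard version yields the openness $A_2=\bigcup_{\ee>0}A_{2-\ee}$, so your plan goes through; note also that no sharp dependence on $[w]_{A_2}$ is needed here, since the paper only invokes qualitative boundedness of $M$ on $L^2(w)$.
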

\begin{theorem}
\cite{CW2} Let $T$ be a Calderon-Zygmund operator on a homogeneous space.  Then $T$ is weak $L_1$.
\end{theorem}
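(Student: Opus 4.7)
My plan is to run the classical Calder\'on-Zygmund weak $(1,1)$ argument, adapted to the homogeneous setting via the dyadic cubes from \cite{C,HK} and tracking the quasitriangle constant $C_0$ carefully. The strategy splits $f$ at level $\lambda$ into a bounded ``good'' part $g$ and a mean-zero ``bad'' part $b=\sum_j b_j$, controls $Tg$ by $L^2$-boundedness, and controls $Tb$ off a small exceptional set $E$ using the kernel smoothness.

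Given $f\in L^1(X)$ and $\lambda>0$, I would let $\{Q_j\}$ be the maximal dyadic cubes with $\fint_{Q_j}|f|>\lambda$. Doubling applied to the dyadic parent of $Q_j$ gives $\fint_{Q_j}|f|\leq C\lambda$, and the Lebesgue differentiation theorem gives $|f|\leq\lambda$ a.e.\ off $\bigcup_j Q_j$. Define $g=f\mathbf 1_{X\setminus\bigcup_j Q_j}+\sum_j(\fint_{Q_j}f)\mathbf 1_{Q_j}$ and $b_j=(f-\fint_{Q_j}f)\mathbf 1_{Q_j}$; standard estimates give $\|g\|_\infty\lesssim\lambda$, $\|g\|_1\leq\|f\|_1$, $\int b_j=0$, $\|b_j\|_1\lesssim\int_{Q_j}|f|$, and $\sum_j|Q_j|\leq\lambda^{-1}\|f\|_1$. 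Chebyshev and $L^2$-boundedness then yield $|\{|Tg|>\lambda/2\}|\lesssim\lambda^{-2}\|g\|_\infty\|g\|_1\lesssim\|f\|_1/\lambda$.

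For the bad part, choose centers $c_j$ and radii $r_j$ with $Q_j\subset B(c_j,r_j)$ and $|B(c_j,r_j)|\sim|Q_j|$, and let $B_j^*=B(c_j,\kappa r_j)$ with $\kappa=\kappa(C_0,\eta)$ large enough that $\rho(y,c_j)\leq\eta\,\rho(x,c_j)$ for every $y\in Q_j$ and every $x\notin B_j^*$. Setting $E=\bigcup_j B_j^*$, iterated doubling gives $|E|\lesssim\sum_j|Q_j|\lesssim\|f\|_1/\lambda$. Using $\int b_j=0$ together with the smoothness of $K$ in the second argument,
\[|Tb_j(x)|=\Bigl|\int(K(x,y)-K(x,c_j))b_j(y)\,dy\Bigr|\lesssim\int\Bigl(\frac{\rho(y,c_j)}{\rho(x,c_j)}\Bigr)^{\eta}\frac{|b_j(y)|}{|B(x,\rho(x,c_j))|}\,dy\]
for $x\notin E$; partitioning $X\setminus B_j^*$ into annuli $\rho(x,c_j)\sim 2^k\kappa r_j$ and invoking doubling yields $\int_{X\setminus E}|Tb_j|\lesssim\sum_{k\geq0}2^{-k\eta}\|b_j\|_1\lesssim\int_{Q_j}|f|$. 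Summing in $j$ and applying Chebyshev produces $|\{x\notin E:|Tb|>\lambda/2\}|\lesssim\|f\|_1/\lambda$, and combining with the bound on $|E|$ and the good part finishes the proof.

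The hard part is the quasimetric bookkeeping. Because $\rho$ only satisfies a quasitriangle inequality with constant $C_0$, the dilation factor $\kappa$ must simultaneously be (i) moderate enough that $|E|$ is controlled by $\sum_j|Q_j|$ through iterated doubling, and (ii) large enough that the hypothesis $\rho(y,c_j)\leq\eta\,\rho(x,c_j)$ of the smoothness condition is satisfied whenever $x\notin B_j^*$ and $y\in Q_j$. Both requirements depend only on the fixed constants $C_0$, $C_d$, and $\eta$, so $\kappa$ can be chosen once and for all; after that the annular sum $\sum_k 2^{-k\eta}$ converges and the Euclidean-style argument carries over verbatim.
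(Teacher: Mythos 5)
Your proof is correct and is essentially the standard argument: the paper itself gives no proof of this statement, citing Coifman--Weiss \cite{CW2}, and what you wrote is precisely that classical Calder\'on--Zygmund decomposition adapted to the doubling/quasimetric setting (good/bad split on maximal dyadic cubes, $L^2$ bound for $g$, mean-zero cancellation plus kernel smoothness and annular doubling estimates for $b$ off an expanded exceptional set). The only cosmetic points are that the kernel estimate should be stated with the ball $|B(c_j,\rho(x,c_j))|$ centered at $c_j$ (equivalent to yours by doubling) and that one should note the usual convention for the stopping time when $X$ is bounded and $\fint_X|f|>\lambda$; neither affects the argument.
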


\section{Median}
We collect some standard properties of the median and its generalizations.  Note that few assumptions are required on the measure.
Let $(Q, A, |\cdot|)$ be a measure space where $|Q| < \infty$.  Let $\mu$ be the corresponding normalized measure so that $\mu(Q)=1$.
For a measurable function:  
\begin{equation*}
f:Q\rightarrow \R 
\end{equation*}
denote by $F(a)=\mu(f<a)$ its distribution function.
 
\begin{definition}\label{median}
The median of $f$, $m(f)$, is defined as
\[m(f)=\sup\lbrace a>0\colon F(a)\leq 1/2\rbrace.\]
\end{definition}

Now we'll point out several properties of the median that we'll be using.
\begin{lemma}\label{mfisobtained}
We have
\begin{equation*}
\lbrace a\colon F(a)\leq 1/2\rbrace=(-\infty;m(f)],
\end{equation*}
in particular,
\begin{equation*}
F(m(f))\leq 1/2.
\end{equation*}
\end{lemma}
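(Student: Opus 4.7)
The plan is to exploit two elementary properties of the distribution function $F(a) = \mu(f < a)$. First, $F$ is nondecreasing, since $a \leq b$ implies $\{f < a\} \subseteq \{f < b\}$. Second, $F$ is left-continuous: for any sequence $a_n \uparrow a$, the sets $\{f < a_n\}$ increase to $\{f < a\}$, so by continuity of $\mu$ from below, $F(a_n) \uparrow F(a)$. These two facts will pin down the shape of the sublevel set $S := \{a : F(a) \leq 1/2\}$.

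By the monotonicity of $F$, the set $S$ is downward closed in $\mathbb{R}$, hence an interval of the form $(-\infty, c)$ or $(-\infty, c]$ with $c := \sup S \in [-\infty, +\infty]$. To show $S = (-\infty, c]$, that is, that the supremum is attained, I would pick a sequence $a_n \in S$ with $a_n \uparrow c$; then $F(a_n) \leq 1/2$ for every $n$, and left-continuity yields $F(c) = \lim_n F(a_n) \leq 1/2$, so $c \in S$.

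Comparing with Definition \ref{median} gives $c = m(f)$, which is exactly the asserted set equality. The ``in particular'' clause $F(m(f)) \leq 1/2$ is then just the special case $a = m(f)$ of the equality.

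The only step with any content is left-continuity of $F$: the convention $F(a) = \mu(f < a)$ (strict inequality) is crucial here, since the dual quantity $\mu(f \leq a)$ would only be right-continuous and would force the sup into a half-open interval $(-\infty, c)$ instead. With left-continuity in hand the proof is essentially reading off a supremum, so I do not expect any genuine obstacle.
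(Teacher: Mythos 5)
Your proof is correct and takes essentially the same approach as the paper: the paper's one-line argument invokes left-continuity of $F$ to show the set $\{a : F(a) \leq 1/2\}$ contains its supremum, and you have simply spelled out the details (monotonicity giving a downward-closed set, continuity of measure from below giving left-continuity, taking $a_n \uparrow \sup S$).
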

\begin{proof}
The fact that the set $\lbrace a\colon F(a)\leq 1/2\rbrace$ includes its supremum follows from the left continuity of the distribution function $F$.
\end{proof}
\begin{lemma}\label{mfplus}
We have
\begin{equation*}
\mu(f\leq m(f))\geq  1/2.
\end{equation*}
\end{lemma}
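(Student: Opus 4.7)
The plan is to prove the bound by passing to complements and exploiting the supremum characterization of $m(f)$ from Definition \ref{median}, together with the left-continuity already used in Lemma \ref{mfisobtained}.

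First I would note that since $\mu(Q)=1$, we have
\[
\mu(f\leq m(f)) = 1 - \mu(f > m(f)),
\]
so it suffices to establish $\mu(f > m(f))\leq 1/2$. This reduction shifts the work from a ``$\leq$'' set (which is not directly controlled by $F$, since $F$ is defined via strict inequality) to a ``$>$'' set.

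Next, I would invoke the supremum definition of $m(f)$: for every $a > m(f)$, $a$ lies outside the set $\{a : F(a)\leq 1/2\}$, hence
\[
\mu(f < a) = F(a) > 1/2, \qquad \text{so}\qquad \mu(f\geq a) < 1/2.
\]
Applying this with $a = m(f) + 1/n$ for each positive integer $n$ gives $\mu(f \geq m(f)+1/n)\leq 1/2$ for all $n$.

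Finally, I would write $\{f > m(f)\}$ as the countable increasing union $\bigcup_{n\geq 1}\{f \geq m(f)+1/n\}$ and apply continuity of measure from below to conclude
\[
\mu(f > m(f)) = \lim_{n\to\infty}\mu(f\geq m(f)+1/n) \leq 1/2,
\]
which together with the first step yields the desired inequality. The argument is essentially bookkeeping; the only mildly subtle point is the passage from strict to non-strict inequality, and this is handled cleanly by the monotone-continuity step rather than by any regularity hypothesis on $\mu$.
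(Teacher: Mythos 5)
Your proof is correct and follows essentially the same line of reasoning as the paper: both hinge on the observation that $F(a)>1/2$ for every $a>m(f)$ (from the supremum definition), combined with a measure-continuity limit. The only cosmetic difference is that you pass to the complement $\{f>m(f)\}$ and use continuity from below, whereas the paper works directly with $\mu(f\leq m(f))=\lim_{\epsilon\to 0^+}F(m(f)+\epsilon)$; these are equivalent formulations of the same argument.
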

\begin{proof}
Observe that $\mu(f\leq m(f))=\lim_{\epsilon\rightarrow 0+}F(m(f)+\epsilon)$ and $F(m(f)+\epsilon)>1/2$ by \ref{median}, thus the claim follows.
\end{proof}
\begin{lemma}\label{monotonicity} If $f\leq g$, then $m(f)\leq m(g).$  
\end{lemma}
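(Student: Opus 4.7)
The proof plan is to unwind the definitions and observe that the distribution functions are comparable in the correct direction. Write $F_f(a) = \mu(f < a)$ and $F_g(a) = \mu(g < a)$. Since $f \leq g$ pointwise, we have the set inclusion $\{g < a\} \subseteq \{f < a\}$ for every real $a$, and therefore
\[
F_g(a) \leq F_f(a) \qquad \text{for all } a.
\]

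From this, the inclusion of the level sets defining the median is immediate: if $F_f(a) \leq 1/2$, then $F_g(a) \leq F_f(a) \leq 1/2$, so
\[
\{a > 0 : F_f(a) \leq 1/2\} \subseteq \{a > 0 : F_g(a) \leq 1/2\}.
\]
Taking the supremum of each set and applying Definition \ref{median} gives $m(f) \leq m(g)$, as desired.

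There is essentially no obstacle here; the lemma is a purely order-theoretic consequence of the definition, and neither Lemma \ref{mfisobtained} nor Lemma \ref{mfplus} is needed. The only point one should flag, for the sake of a self-contained argument, is that the direction of the inequality between $F_f$ and $F_g$ is the reverse of the pointwise inequality between $f$ and $g$, which is exactly what makes the sublevel sets $\{a : F(a) \leq 1/2\}$ monotone with respect to $f$.
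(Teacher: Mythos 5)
Your argument is correct, and it is the natural one; the paper in fact states this lemma without proof, so there is no alternative argument to compare against. The key observation you flag---that $f\leq g$ pointwise gives $\{g<a\}\subseteq\{f<a\}$ and hence reverses the inequality to $F_g\leq F_f$, which is exactly the monotonicity needed for the sublevel sets $\{a:F(a)\leq 1/2\}$ to be nested---is precisely what the authors take for granted.
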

\begin{lemma}
\label{abs}
 $|m(f)|\leq m(|f|)$.
\end{lemma}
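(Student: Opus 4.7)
The plan is to reduce the inequality to a direct application of the monotonicity lemma (Lemma \ref{monotonicity}) in one case, and a short distributional argument in the other, by splitting on the sign of $m(f)$. Since $|m(f)| = \max\{m(f),-m(f)\}$, it suffices to establish the two bounds $m(f)\leq m(|f|)$ and $-m(f)\leq m(|f|)$ separately.

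For the first bound, I would simply observe that the pointwise inequality $f\leq |f|$ holds everywhere on $Q$, so Lemma \ref{monotonicity} immediately gives $m(f)\leq m(|f|)$. No distributional argument is needed here.

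For the second bound, $-m(f)\leq m(|f|)$, the easy subcase is $m(f)\geq 0$: since $|f|\geq 0$, the distribution function $F_{|f|}$ vanishes at $0$, so $0$ lies in the set defining $m(|f|)$, and $-m(f)\leq 0\leq m(|f|)$. The substantive subcase is $m(f)<0$. Here I would start from Lemma \ref{mfplus}, which gives $\mu(f\leq m(f))\geq 1/2$, and note that on this set $|f|\geq |m(f)|=-m(f)$. Consequently
\begin{equation*}
\mu(|f|\geq -m(f))\geq \mu(f\leq m(f))\geq 1/2,
\end{equation*}
so $F_{|f|}(-m(f))=\mu(|f|<-m(f))\leq 1/2$. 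Since $-m(f)>0$, the definition of median directly yields $-m(f)\leq m(|f|)$, completing the case.

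I do not foresee a real obstacle: the only mildly delicate point is the sign bookkeeping, because the median as defined in Definition \ref{median} is not manifestly odd under $f\mapsto -f$, so one cannot simply write $m(-f)=-m(f)$ to symmetrize the argument. Splitting on the sign of $m(f)$ and combining Lemma \ref{monotonicity} with Lemma \ref{mfplus} sidesteps this and handles both halves uniformly.
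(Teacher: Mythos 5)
Your proof is correct, and it arrives at the same underlying inequality that the paper exploits, but it routes around a step that the paper uses. The paper first shows $-m(f)\leq m(-f)$ by a distributional argument (from $\mu(f\leq m(f))\geq 1/2$ one gets $\mu(-f<-m(f))\leq 1/2$, so $-m(f)$ lies in the defining set for $m(-f)$), and then applies Lemma~\ref{monotonicity} a second time to the pointwise bound $-f\leq |f|$ to conclude $m(-f)\leq m(|f|)$. You skip the intermediate quantity $m(-f)$ entirely: you observe directly that on the set $\{f\leq m(f)\}$ one has $|f|\geq -m(f)$ when $m(f)<0$, which gives $F_{|f|}(-m(f))\leq 1/2$ and hence $-m(f)\leq m(|f|)$, at the cost of a sign split to handle the trivial case $m(f)\geq 0$. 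The two proofs are of comparable length; the paper's detour through $m(-f)$ avoids the case distinction and reuses Lemma~\ref{monotonicity}, while yours works purely with the distribution function of $|f|$ and never needs to introduce $m(-f)$. Your closing remark that $m(-f)=-m(f)$ need not hold is accurate and is exactly why the paper also establishes only the one-sided bound $-m(f)\leq m(-f)$ rather than an equality.
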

\begin{proof}
By \eqref{monotonicity} we have. $m(f)\leq m{(|f|)}$.  On the other hand by \eqref{mfisobtained},
\begin{equation*}
\mu(f\leq m(f))\leq 1/2
\end{equation*}
so that $\mu(f> m(f))\leq 1/2.$  Equivalently $\mu(-f< -m(f))\leq 1/2$,
hence by the definition of the median of $-f$, \[-m(f)\leq m(-f).\]  This coupled with \eqref{monotonicity} gives:
\begin{equation*}
-m(f)\leq m(-f)\leq m(|f|)
\end{equation*}
\end{proof}
\begin{lemma}\label{constant}
For any $c\in \mathbb{R}$,
\begin{equation*}
 m(f-c) = m(f)-c
\end{equation*}
\end{lemma}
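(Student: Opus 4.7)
The plan is to reduce the identity $m(f-c)=m(f)-c$ to a straightforward change of variables in the defining supremum. Let $g:=f-c$ and denote its distribution function by $G(a)=\mu(g<a)$. The first step is the direct computation
\[
G(a) = \mu(f-c<a) = \mu(f<a+c) = F(a+c),
\]
so $G$ is obtained from $F$ by a horizontal translation by $-c$.

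The second step is to apply Definition \ref{median} to $g$. Under the substitution $b=a+c$, the defining set $\{a:G(a)\leq 1/2\}$ coincides with $\{b-c:F(b)\leq 1/2\}$, which is the translate by $-c$ of the defining set for $m(f)$. Since translation commutes with taking the supremum,
\[
m(f-c) = \sup\{a:G(a)\leq 1/2\} = \sup\{b:F(b)\leq 1/2\}-c = m(f)-c.
\]

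I do not anticipate a substantive obstacle: the argument is essentially a one-line translation of variables in the sup, using only that $\mu$ does not know about the shift. The only point worth flagging is interpretive — the supremum in Definition \ref{median} must range over all $a\in\R$ (not only over $a>0$ as literally written), since otherwise translation invariance would break as soon as $c$ exceeded $m(f)$, and the claim in Lemma \ref{abs} that $-m(f)\leq m(-f)$ would be similarly malformed. Under that (clearly intended) reading, the two steps above yield the identity with no further calculation.
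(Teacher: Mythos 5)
Your proof is correct, and the paper states Lemma~\ref{constant} without proof, treating it as immediate; your change-of-variables argument (observing that the distribution function of $f-c$ is $F(a+c)$ and shifting the supremum) is exactly the natural one-line justification. Your side remark is also well taken: the restriction ``$a>0$'' in Definition~\ref{median} must be a typo for ``$a\in\R$,'' since otherwise not only this lemma but also the step $-m(f)\leq m(-f)$ in Lemma~\ref{abs} would fail, and the median would be ill-defined for functions taking only negative values.
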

\begin{lemma}\label{changingconstant}
We have
\begin{equation*}
 m(|f-m(f)|)\leq 2\inf_{c\in R} m(|f-c|)
\end{equation*}
\end{lemma}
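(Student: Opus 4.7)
The plan is to use the triangle inequality $|f-m(f)| \leq |f-c| + |c-m(f)|$ together with the previously established properties of the median, treating $|c-m(f)|$ as a (real) constant.

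First I would fix an arbitrary $c\in\R$ and apply the pointwise inequality above. By the monotonicity of the median (Lemma \ref{monotonicity}), taking medians preserves this inequality. Since $|c-m(f)|$ is a constant, Lemma \ref{constant} lets me pull it out of the median, yielding
\[
m(|f-m(f)|)\leq m(|f-c|+|c-m(f)|)=m(|f-c|)+|c-m(f)|.
\]

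Next I need to bound $|c-m(f)|$ by $m(|f-c|)$. Using Lemma \ref{constant} (applied to shift by $c$), $m(f)-c=m(f-c)$, so $|c-m(f)|=|m(f-c)|$. Then Lemma \ref{abs} gives $|m(f-c)|\leq m(|f-c|)$. Combining this with the display above yields
\[
m(|f-m(f)|)\leq 2\,m(|f-c|).
\]
Taking the infimum over $c\in\R$ finishes the proof.

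There is no real obstacle here; the only subtlety is remembering that Lemmas \ref{monotonicity}, \ref{abs}, and \ref{constant} are each invoked for a different purpose (monotonicity for the triangle inequality, translation invariance to absorb the constant, and $|m(g)|\leq m(|g|)$ to replace the constant by a median). Once those three tools are lined up, the proof is a two-line calculation.
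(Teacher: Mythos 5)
Your proof is correct and is essentially the paper's own argument: the paper rewrites $f-m(f)=(f-c)-m(f-c)$ up front via Lemma \ref{constant} and then applies the triangle inequality, monotonicity, Lemma \ref{constant} again, and Lemma \ref{abs}, whereas you apply the triangle inequality first and identify $|c-m(f)|=|m(f-c)|$ at the end. The two are the same chain of estimates in a slightly different order, using the same three lemmas for the same purposes.
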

\begin{proof}
We have 
\[m(|f-m(f)|)=m(|f-c-m(f-c)|) \leq m(|f-c|+|m(f-c)|) =\]\[= m(|f-c|)+|m(f-c)|\leq 2m(|f-c|)\] where we applied \ref{constant}, \ref{monotonicity}, \ref{constant} again and finally \ref{abs}.
\end{proof}
It is important to note that the triangle inequality fails to hold for the median, as illustrated by the following example.
\begin{example}
Let $Q=[0,1]$ with $|\cdot|$ as Lebesgue measure. Let $f(x) = 1\chi_{[0,3/8]} + \frac{1}{4}\chi_{[3/8,1]}$ and $g(x) = \frac{1}{4}\chi_{[0,5/8]}+1\chi_{[5/8,1]}$.  Then $m(f+g) = 5/4$, but $m(f) = m(g) = 1/4$.
\end{example}
However, we have the following:
\begin{lemma}\label{mediantosup}
\begin{equation*}
m(|f|)\leq \|f-g\|_{\infty}+m(|g|)
\end{equation*}
\end{lemma}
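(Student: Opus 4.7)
The plan is to combine a pointwise inequality with the two most elementary properties of the median already collected above, namely monotonicity (Lemma \ref{monotonicity}) and translation by a constant (Lemma \ref{constant}). The triangle inequality may fail for the median in general, as the preceding example shows, but when one of the ``summands'' is a constant everything reduces to these two lemmas, which is precisely the situation we are in.

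More concretely, I would start from the pointwise bound
\[
|f(x)| \le |g(x)| + |f(x)-g(x)| \le |g(x)| + \|f-g\|_\infty,
\]
valid for (almost) every $x \in Q$. Applying Lemma \ref{monotonicity} to the two sides yields
\[
m(|f|) \le m\bigl(|g| + \|f-g\|_\infty\bigr).
\]
Since $\|f-g\|_\infty$ is a real constant, Lemma \ref{constant} (applied with $c = -\|f-g\|_\infty$) identifies the right-hand side as $m(|g|) + \|f-g\|_\infty$, which finishes the proof.

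There is no real obstacle here: the only conceptual point is to recognise that the failure of the triangle inequality for the median is irrelevant because we are only adding a constant to $|g|$, and translation by a constant is exactly the content of Lemma \ref{constant}. The argument is a two-line application of monotonicity followed by translation; no properties of $f$, $g$ or the underlying measure beyond those used to define the median are needed.
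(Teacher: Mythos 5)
Your argument is correct and is essentially the paper's own: the paper also bounds $|f|\le |f-g|+|g|\le \|f-g\|_\infty+|g|$ (a.e.) and then applies Lemma~\ref{monotonicity} followed by Lemma~\ref{constant}; you have merely collapsed the two pointwise inequalities into one step before invoking monotonicity. You also correctly handle the a.e.\ caveat that the paper flags explicitly.
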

\begin{proof}
\[m(|f|) \leq m(|f-g|+|g|)\leq m(\|f-g\|_{\infty}+|g|) = \|f-g\|_{\infty}+m(|g|)\]
using \ref{monotonicity} twice, followed by \ref{constant}.  Note that we could replace $|f-g|$ with $\|f-g\|_{\infty}$ inside the median since the median does not change upon removing a set of measure 0.
\end{proof}

Lerner defined a generalization of the median value, $\omega_{\lambda}$.
\begin{definition}
For $0<\lambda<1$ and define
\begin{equation*}
 \omega_{\lambda}(f)=\sup(a: F(a)\leq 1-\lambda).
\end{equation*}
\end{definition}
\begin{remark}
The median is a special case: 
 $\omega_{1/2}(f) = m(f)$.
 \end{remark}
If unclear from the context, we'll write $\omega_\lambda(f,Q)$ instead of $\omega_\lambda(f)$ to emphasize the domain of $f$.
\newline
The analogues of \ref{mfisobtained}, \ref{mfplus}, \ref{monotonicity}, \ref{mediantosup} for $\omega_{\lambda}$ hold, for example the analogue of \ref{mediantosup} is:
\begin{lemma}\label{wtosup}
For any $0<\lambda<1$
\begin{equation*}
\omega_{\lambda}(|f|)\leq \|f-g\|_{\infty}+\omega_{\lambda}(|g|)
\end{equation*}
\end{lemma}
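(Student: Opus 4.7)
The plan is to mimic the proof of Lemma~\ref{mediantosup} almost verbatim, replacing each invocation of a median property by its $\omega_\lambda$ analogue. Concretely, two ingredients are needed: monotonicity (if $f \leq g$ then $\omega_\lambda(f) \leq \omega_\lambda(g)$) and translation ($\omega_\lambda(f-c) = \omega_\lambda(f) - c$ for every $c \in \mathbb{R}$). The paper asserts these hold but does not record proofs, so I would spell both out briefly before deploying them.

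Monotonicity follows from the observation that $f \leq g$ implies $\{f < a\} \supseteq \{g < a\}$, hence $F_f(a) \geq F_g(a)$ for every $a \in \mathbb{R}$; therefore the set $\{a : F_f(a) \leq 1-\lambda\}$ is contained in $\{a : F_g(a) \leq 1-\lambda\}$, and taking suprema yields the desired inequality. Translation follows from the identity $F_{f-c}(a) = \mu(f - c < a) = \mu(f < a+c) = F_f(a+c)$, which, after a change of variable in the defining supremum, gives $\omega_\lambda(f-c) = \omega_\lambda(f) - c$.

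With these two tools available, the three-step chain from the median proof runs through unchanged. Starting from the pointwise bound $|f| \leq |f-g| + |g|$, monotonicity gives $\omega_\lambda(|f|) \leq \omega_\lambda(|f-g| + |g|)$. Since $|f-g| \leq \|f-g\|_\infty$ almost everywhere and $\omega_\lambda$ depends only on the distribution function, I may replace $|f-g|$ by the constant $\|f-g\|_\infty$ without affecting the generalized median, so a second application of monotonicity yields $\omega_\lambda(|f-g|+|g|) \leq \omega_\lambda(\|f-g\|_\infty + |g|)$. Finally, translation with $c = -\|f-g\|_\infty$ produces $\omega_\lambda(\|f-g\|_\infty + |g|) = \|f-g\|_\infty + \omega_\lambda(|g|)$, closing the chain.

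I do not anticipate a real obstacle here; the argument is entirely routine once the two auxiliary properties are isolated. The only subtle point worth flagging is the null-set remark already invoked in Lemma~\ref{mediantosup}: modifying the argument of $\omega_\lambda$ on a set of measure zero leaves its value unchanged, because $F(a) = \mu(f < a)$ is insensitive to such modifications. This is what justifies replacing the possibly pointwise-larger function $|f-g|$ by the essential-supremum constant.
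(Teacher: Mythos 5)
Your argument is correct and matches the paper's intent exactly: the paper supplies no separate proof for Lemma~\ref{wtosup}, stating only that the $\omega_\lambda$-analogues of the median facts hold, and your proof is the direct transcription of the chain in Lemma~\ref{mediantosup} using the $\omega_\lambda$-versions of monotonicity and the translation identity (Lemma~\ref{constant}), with the same null-set remark justifying the replacement of $|f-g|$ by $\|f-g\|_\infty$. Your brief verifications that monotonicity and translation carry over to $\omega_\lambda$ via the distribution function are a reasonable addition, since the paper only asserts them without record.
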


The analogue of \ref{changingconstant} holds only for a range of $\lambda$'s.
\begin{lemma}\label{changingconstantw}
For all $0<\lambda \leq 1/2$ we have:
\begin{equation*}
\omega_{\lambda}(|f-m(f)|)\leq 2\inf_{c\in \R} \omega_{\lambda}(|f-c|)
\end{equation*}
\end{lemma}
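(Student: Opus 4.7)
The strategy is to mimic the proof of Lemma \ref{changingconstant} almost verbatim, replacing $m$ by $\omega_\lambda$ wherever possible, and then to absorb the leftover factor using the hypothesis $\lambda\leq 1/2$. Concretely, fix $c\in\R$. By Lemma \ref{constant}, $m(f)-c=m(f-c)$, so we may write
\[
|f-m(f)|=|(f-c)-m(f-c)|\leq |f-c|+|m(f-c)|
\]
pointwise. Applying the analogue of Lemma \ref{monotonicity} for $\omega_\lambda$, followed by the analogue of Lemma \ref{constant} for $\omega_\lambda$ (which moves the constant $|m(f-c)|$ outside), yields
\[
\omega_\lambda(|f-m(f)|)\leq \omega_\lambda(|f-c|)+|m(f-c)|.
\]

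The next step is to handle the $|m(f-c)|$ term. By Lemma \ref{abs} we already know $|m(f-c)|\leq m(|f-c|)$, so it remains to compare $m(|f-c|)$ with $\omega_\lambda(|f-c|)$. Here is where $\lambda\leq 1/2$ enters: directly from the definition, the map $\lambda\mapsto\omega_\lambda(g)=\sup\{a\colon F(a)\leq 1-\lambda\}$ is non-increasing, since decreasing $\lambda$ enlarges the set of admissible $a$. Because $m=\omega_{1/2}$, the assumption $\lambda\leq 1/2$ gives $m(|f-c|)\leq \omega_\lambda(|f-c|)$.

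Combining these two observations produces
\[
\omega_\lambda(|f-m(f)|)\leq \omega_\lambda(|f-c|)+m(|f-c|)\leq 2\omega_\lambda(|f-c|),
\]
and taking the infimum over $c\in\R$ delivers the claimed inequality.

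The only conceptual subtlety, and the place the hypothesis $\lambda\leq 1/2$ is genuinely used, is the final comparison $m\leq\omega_\lambda$; this is precisely what the example preceding Lemma \ref{mediantosup} warns us about, since without such a comparison the loss of two would have to be recovered some other way. Everything else is bookkeeping: lemmas \ref{constant}, \ref{monotonicity}, \ref{abs}, their $\omega_\lambda$-analogues listed in the excerpt, and pointwise triangle inequality.
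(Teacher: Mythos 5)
Your proof is correct and matches the paper's argument step for step: apply Lemma \ref{constant} to rewrite $f-m(f)$ as $(f-c)-m(f-c)$, use the pointwise triangle inequality together with the $\omega_\lambda$-analogues of monotonicity and shift-by-a-constant, bound $|m(f-c)|$ by $m(|f-c|)$ via Lemma \ref{abs}, and finally absorb it using $m=\omega_{1/2}\leq\omega_\lambda$ for $\lambda\leq 1/2$. The paper's proof is the same chain, just written more tersely.
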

\begin{proof}
Arguing just like in \ref{changingconstant} we get
\[\omega_{\lambda}(|f-m(f)|) = \omega_{\lambda}(|f-c-m(f-c)|)\leq \omega_{\lambda}(|f-c|+|m(f-c)|)\leq \omega_{\lambda}(|f-c|)+m(|f-c|),\]  but $\omega_{\lambda}(|f-c|)+m(|f-c|)\leq 2\omega_{\lambda}(|f-c|)$ since $m(|f|)\leq \omega_{\lambda}(|f|)$ only for $0\leq\lambda\leq 1/2$.
\end{proof}
We additionally point out the following lemma that will relate $\omega_{\lambda}$ to the maximal operator:
\begin{lemma}
\label{wtomaximal}
For $0<\lambda<1$ we have
\begin{equation*}
\lambda\omega_{\lambda}(|f|,Q) \leq \frac{1}{|Q|}\int_Q |f|
\end{equation*}
\end{lemma}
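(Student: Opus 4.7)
The plan is to derive this as a direct consequence of Markov's inequality, once we know that the value $\omega_\lambda(|f|)$ is actually attained as a distributional threshold. Throughout, I will work with the normalized measure $\mu = |\cdot|/|Q|$, so that $\frac{1}{|Q|}\int_Q |f| = \int_Q |f|\,d\mu$.

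First, I would record the $\omega_\lambda$ analogue of Lemma \ref{mfisobtained}, which is one of the analogues the author has already asserted to hold: by left continuity of the distribution function $F(a) = \mu(|f| < a)$, the set $\{a : F(a) \leq 1-\lambda\}$ contains its supremum, so
\begin{equation*}
\mu\bigl(|f| < \omega_\lambda(|f|)\bigr) = F\bigl(\omega_\lambda(|f|)\bigr) \leq 1-\lambda.
\end{equation*}
Taking complements, $\mu\bigl(|f| \geq \omega_\lambda(|f|)\bigr) \geq \lambda$.

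Second, I would apply Markov's inequality: since $|f| \geq \omega_\lambda(|f|)\cdot \chi_{\{|f| \geq \omega_\lambda(|f|)\}}$ pointwise,
\begin{equation*}
\frac{1}{|Q|}\int_Q |f| \;=\; \int_Q |f|\,d\mu \;\geq\; \omega_\lambda(|f|)\cdot \mu\bigl(|f| \geq \omega_\lambda(|f|)\bigr) \;\geq\; \lambda\,\omega_\lambda(|f|),
\end{equation*}
which is the desired inequality.

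I do not anticipate any obstacle here: the lemma is a one-line consequence of Markov's inequality once the definitional inequality $\mu(|f| \geq \omega_\lambda(|f|)) \geq \lambda$ is in hand. The only subtlety worth flagging is the translation between the ambient measure $|\cdot|$ and the normalized probability measure $\mu$ on $Q$, which is where the factor $\tfrac{1}{|Q|}$ in the statement of the lemma originates.
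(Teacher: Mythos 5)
Your proof is correct, and it is genuinely cleaner than the paper's. The paper sets $a = \tfrac{1}{\lambda|Q|}\int_Q|f|$, uses Chebychev to show $\mu(|f|\geq a)\leq\lambda$, and then must perform a two-case analysis (strict inequality vs.\ equality in Chebychev) to translate this into $\omega_\lambda(|f|)\leq a$; the equality case in particular requires the somewhat delicate observation that equality in Markov forces $|f|$ to take only the values $0$ and $a$ a.e. You instead go in the opposite direction: you first extract the defining distributional fact $\mu(|f|\geq\omega_\lambda(|f|))\geq\lambda$ (the $\omega_\lambda$-analogue of Lemma~\ref{mfisobtained}, which the paper itself invokes almost verbatim in the proof of Lemma~\ref{wtoweak}), and then a single application of Markov's inequality with threshold $\omega_\lambda(|f|)$ gives the result with no case split. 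Both arguments rest on Markov/Chebychev, but yours avoids the equality-case bookkeeping entirely by applying it to the natural threshold $\omega_\lambda(|f|)$ rather than to an integral-defined level $a$. Nothing is missing; the step $F(\omega_\lambda(|f|))\leq 1-\lambda$ is justified by the left-continuity of $F$, exactly as in the paper's Lemma~\ref{mfisobtained}.
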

\begin{proof}
Let
\begin{equation*}
a= \frac{1}{\lambda |Q|}\int_Q |f|.
\end{equation*}
We only need to consider the case $a>0$.

By Chebychev's inequality, we get
\begin{equation*}
\mu\left(|f(x)|\geq a\right)=\frac{|\{x\in Q: |f(x)|\geq a\}|}{|Q|}\leq \frac{1}{a} \cdot\frac{1}{|Q|}\int_Q |f|=\lambda.
\end{equation*}
Consider the two possible cases
\begin{equation*}
\mu(|f(x)|\geq a)< \lambda \text{ and } \mu(|f(x)|\geq a)= \lambda.
\end{equation*}

In the first case, $\mu(|f(x)|<a)>1-\lambda$, hence $\omega_{\lambda} (|f|,Q)\leq a$.

In the second case, when the Chebychev inequality is an equality, the function $|f|$ can take only two values: $0$ and $a$ (up to a set of measure zero), so then $|\{x:|f|=a\}|=\lambda |Q|$. This implies $\omega_{\lambda} (|f|,Q)=a$ and 
\begin{equation*}
\lambda \omega_{\lambda}(|f|,Q) = \frac{1}{|Q|}\int_Q |f|.
\end{equation*}

\end{proof}

The following lemma relates $\omega_{\lambda}$ to the weak $L^{1,\infty}$ norm:
\begin{lemma}
\label{wtoweak}
For any $0<\lambda<1$ we have
\begin{equation*}
\omega_\lambda(|f|)\leq \frac{1}{\lambda |Q|}||f||_{1,\infty}
\end{equation*}
\begin{proof}
Using the definition of $\omega_{\lambda}$, taking the complement of the set involved, we have,
\begin{equation*}
\lambda\leq \mu(|f|\geq \omega_{\lambda}(|f|))=\frac{|\{x\in Q:|f|\geq \omega_{\lambda}(|f|)\}|}{|Q|}
\leq  \frac{||f||_{1,\infty}}{\omega_{\lambda}(|f|)|Q|}
\end{equation*}
\end{proof}
\end{lemma}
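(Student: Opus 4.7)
The plan is to bound the measure of the superlevel set $\{x \in Q : |f(x)| \geq \omega_\lambda(|f|)\}$ in two ways: from below using the definition of $\omega_\lambda$, and from above using the definition of the weak $L^{1,\infty}$ norm applied at the threshold $t = \omega_\lambda(|f|)$. Comparing the two bounds and rearranging will produce the inequality.

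First, I would invoke the $\omega_\lambda$-analogue of Lemma \ref{mfisobtained}, which the text asserts to hold and which follows from the same left-continuity argument applied to the distribution function $F$ of $|f|$: the supremum in the definition of $\omega_\lambda(|f|)$ is attained, so $F(\omega_\lambda(|f|)) = \mu(|f| < \omega_\lambda(|f|)) \leq 1 - \lambda$. Taking complements in the normalized probability space $(Q, \mu)$ yields
\[
\lambda \leq \mu(|f| \geq \omega_\lambda(|f|)) = \frac{|\{x \in Q : |f(x)| \geq \omega_\lambda(|f|)\}|}{|Q|}.
\]
Next I would apply the definition of the weak $L^{1,\infty}$ norm at the level $t = \omega_\lambda(|f|)$, which we may assume to be strictly positive (otherwise the conclusion is trivial), to obtain
\[
|\{x \in Q : |f(x)| \geq \omega_\lambda(|f|)\}| \leq \frac{\|f\|_{1,\infty}}{\omega_\lambda(|f|)}.
\]
Multiplying the first display by $|Q|$ and combining with the second gives $\lambda |Q| \cdot \omega_\lambda(|f|) \leq \|f\|_{1,\infty}$, which is the claim.

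There is no real obstacle here; the argument is essentially a two-line chain of inequalities. The only points requiring any care are handling the degenerate case $\omega_\lambda(|f|) = 0$ before one divides by it, and confirming that the left-continuity proof of Lemma \ref{mfisobtained} carries over verbatim to $\omega_\lambda$ (it does, since the threshold $1/2$ played no special role in that argument).
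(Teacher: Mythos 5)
Your argument is correct and follows the paper's proof essentially verbatim: lower-bound $\mu(|f|\geq\omega_\lambda(|f|))$ by $\lambda$ via the (analogue of the) attainment lemma, upper-bound the same quantity by the weak-$L^{1,\infty}$ inequality at threshold $\omega_\lambda(|f|)$, and rearrange. You are slightly more careful than the paper in explicitly flagging the trivial case $\omega_\lambda(|f|)=0$ and in spelling out the appeal to left-continuity, but there is no substantive difference in method.
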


The final lemma of this section relates $\omega_{\lambda}$ to the values of function via Lebesgue's differentiation theorem.
\begin{lemma}
\label{LDT}
Let $0<\lambda<1$ and $x_0\in Q_n$ where $Q_n$ is a sequence of sets such that
\[
\int_{Q_n}|f-f(x_0)|\to 0.
\]
Then
\[\omega_{\lambda}(f, Q_n)\rightarrow f(x_0).\]
\end{lemma}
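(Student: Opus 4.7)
The plan is to combine the translation invariance of $\omega_\lambda$ with Chebyshev's inequality and a two-sided quantile bound. First I would invoke the $\omega_\lambda$-analogue of Lemma~\ref{constant}, namely $\omega_\lambda(f-c, Q)=\omega_\lambda(f,Q)-c$; this is immediate from the definition, since the distribution function of $f-c$ is a horizontal shift of that of $f$. Setting $g_n := f - f(x_0)$ on $Q_n$, it then suffices to show that $\omega_\lambda(g_n, Q_n) \to 0$ given $\fint_{Q_n}|g_n|\to 0$, reading the integral in the hypothesis as normalized by $|Q_n|$ in line with the framework of the preceding section.

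Fix $\epsilon>0$. Exactly as in the proof of Lemma~\ref{wtomaximal}, Chebyshev's inequality applied to $|g_n|$ on $Q_n$ gives
\[
\mu_n\bigl(|g_n|>\epsilon\bigr)\;\le\;\frac{1}{\epsilon}\fint_{Q_n}|g_n|\;\longrightarrow\;0,
\]
so for all sufficiently large $n$ one has $\mu_n(|g_n|>\epsilon)<\min(\lambda,\,1-\lambda)$.

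Next I would use this to control $\omega_\lambda(g_n, Q_n)$ on both sides. For the upper bound, suppose toward contradiction that $\omega_\lambda(g_n, Q_n)>\epsilon$. The $\omega_\lambda$-analogue of Lemma~\ref{mfisobtained} then supplies some $a>\epsilon$ with $\mu_n(g_n<a)\le 1-\lambda$, hence $\mu_n(g_n\ge a)\ge\lambda$. Since $\{g_n\ge a\}\subseteq\{|g_n|>\epsilon\}$, this forces $\mu_n(|g_n|>\epsilon)\ge\lambda$, contradicting the Chebyshev estimate. For the lower bound, the inclusion $\{g_n<-\epsilon\}\subseteq\{|g_n|>\epsilon\}$ gives $\mu_n(g_n<-\epsilon)<1-\lambda$, so $-\epsilon$ lies in the set defining $\omega_\lambda(g_n,Q_n)$, yielding $\omega_\lambda(g_n,Q_n)\ge -\epsilon$. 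Together, $|\omega_\lambda(g_n,Q_n)|\le\epsilon$ for all large $n$, which is the claim.

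The only mild subtlety is that to pin the quantile down from both sides, the Chebyshev bound must beat $\min(\lambda,\,1-\lambda)$ rather than just $\lambda$; apart from that, the argument is little more than the estimate underlying Lemma~\ref{wtomaximal} used in both directions, together with the trivial translation invariance of $\omega_\lambda$.
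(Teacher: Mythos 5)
Your proof is correct, and it takes a genuinely different (and in fact more robust) route than the paper. The paper's argument is a one-line chain: translation invariance of $\omega_\lambda$, the $\omega_\lambda$-analogue of Lemma~\ref{abs} (i.e.\ $|\omega_\lambda(g)|\le\omega_\lambda(|g|)$), and then Lemma~\ref{wtomaximal}. (The paper actually writes an ``$=$'' in place of what should be ``$\le$'' at the middle step.) You instead bypass the absolute-value lemma entirely and pin down the quantile $\omega_\lambda(g_n,Q_n)$ from both sides by applying Chebyshev directly to the tails $\{g_n\ge a\}$ and $\{g_n<-\epsilon\}$.

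This is not merely cosmetic. The $\omega_\lambda$-analogue of Lemma~\ref{abs} is \emph{false} for $\lambda>1/2$: take $Q=[0,1]$, $g=-\chi_{[0,0.9]}$ and $\lambda=0.95$, so that $\omega_\lambda(g)=-1$ while $\omega_\lambda(|g|)=0$. Thus the paper's chain, as written, is only valid for $\lambda\le 1/2$ (which covers every application in the paper, since only $\omega_{\ee/4}$ with $\ee<1$ is used), whereas the lemma is stated for all $0<\lambda<1$. Your two-sided argument honestly proves the full stated range; the price you pay is needing the Chebyshev bound to beat $\min(\lambda,1-\lambda)$, which you correctly flag as the one small subtlety. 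You also correctly read the hypothesis as $\fint_{Q_n}|f-f(x_0)|\to 0$, matching the paper's own use of the average in its proof.

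Two very minor points worth tightening in a final write-up: in the upper-bound step you can simply take $a=\omega_\lambda(g_n,Q_n)$ itself (the $\omega_\lambda$-analogue of Lemma~\ref{mfisobtained} gives $\mu_n(g_n<\omega_\lambda(g_n))\le 1-\lambda$ directly, no separate $a$ needed); and in the lower-bound step it is cleaner to argue with $\{g_n<-\epsilon\}\subseteq\{|g_n|\ge\epsilon\}$ so the Chebyshev set matches exactly. Neither affects correctness.
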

\begin{proof}
By \ref{wtomaximal} we have \[|\omega_{\lambda}(f,Q_n) - f(x_0)| =\omega_{\lambda}(|f - f(x_0)|,Q_n)\leq \frac{1}{\lambda} \fint_{Q_n} |f-f(x_0)|\to 0\] 
\end{proof}

\section{Maximal Decomposition}
We prove the analogue of Theorem 2.3 in \cite{L2} for homogeneous spaces. The following theorem describes dyadic decomposition for homogeneous spaces:
\begin{theorem}
\label{dyadic}
\cite{C} and \cite{HK} There exist absolute constants $C>0$, $\delta>0$, $0<\epsilon<1$, a family of sets  $S=\cup_{k\in \mathbb{Z}}S_k$  (called a dyadic decomposition of $X$) and a corresponding family of points $\lbrace x_c(Q)\rbrace_{Q\in S}$ that satisfy the following properties:
\begin{equation}
\label{p1}
\left|X\setminus \bigcup_{Q\in S_k}Q\right| = 0 \text{, for all } k\in \mathbb{Z}
\end{equation}
\begin{equation}
\label{p2}
\text{If } Q_1,Q_2\in S\text{ then either }Q_1\cap Q_2 = \emptyset \text{ or one is contained in the other}.
\end{equation}
\begin{equation}
\label{p3}
\text{ For any }Q_1\in S_k\text{ there exists at least one }Q_2\in S_{k+1}\text{ so that }Q_2\subset Q_1 \text{ (called a child of }Q_1\text{) }
\end{equation}
\begin{equation*}
\text{ and there exists exactly one }Q_3\in S_{k-1}\text{ so that }Q_1\subset Q_3  \text{ (called the parent of }Q_1\text{) }.
\end{equation*}
\begin{equation}
\label{p4}
\text{If } Q_2 \text{ is a child of } Q_1 \text{ then }  |Q_2|\geq \epsilon |Q_1|.
\end{equation}
\begin{equation}
\label{p5}
B(x_c(Q), \delta^k)\subset Q \subset B(x_c(Q), C\delta^k)
\end{equation}
\end{theorem}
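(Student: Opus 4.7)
My plan is to adapt M.~Christ's classical dyadic construction to the homogeneous setting, as refined by Hyt\"onen--Kairema. First I would fix a sufficiently small parameter $\dd>0$ (to be chosen at the end depending on $C_0$ and $C_d$) and, for each $k\in\Z$, select via Zorn's lemma a maximal $\dd^k$-separated subset $\{x_\alpha^k\}_\alpha$ of $X$. Maximality forces a covering property: every $y\in X$ lies within distance $\dd^k$ of some $x_\alpha^k$, and these centers will serve as the $x_c(Q)$ in the statement.

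Next I would impose a tree structure on the centers: to each $x_\alpha^{k+1}$ assign a unique \emph{parent} at level $k$, namely a center $x_\beta^k$ minimizing $\rho(x_\alpha^{k+1},x_\beta^k)$, with ties broken by a fixed well-ordering. Define $Q_\alpha^k$ as (up to a null set) the union of all $x_\gamma^j$ with $j\geq k$ that have $x_\alpha^k$ as an ancestor, together with all $y\in X$ whose nearest chain of centers climbs back to $x_\alpha^k$. Properties \eqref{p1}, \eqref{p2}, \eqref{p3} are then immediate: \eqref{p1} from the covering property, \eqref{p2} from the tree ancestry structure, and \eqref{p3} from the fact that every center at level $k$ has at least one descendant at level $k+1$ (itself being assignable as a child, once a maximal $\dd^{k+1}$-separated set is chosen to include a nearby point).

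For \eqref{p5}, the outer inclusion $Q_\alpha^k\subset B(x_c(Q),C\dd^k)$ follows by iterating the quasitriangle inequality down the tree: every descendant at level $j$ lies within $\dd^j$ of its parent, and summing the resulting geometric series $\sum_{j\geq k}\dd^j$ with the accumulated $C_0$ factors yields a bound of order $\dd^k$ whenever $C_0\dd$ is chosen small. The inner inclusion $B(x_c(Q),\dd^k)\subset Q_\alpha^k$ rests on the $\dd^k$-separation: for $\dd$ small relative to $C_0$, any $y$ sufficiently close to $x_\alpha^k$ is automatically closer to $x_\alpha^k$ than to any competing level-$k$ center, so by construction $y$ is assigned to $Q_\alpha^k$ at every scale. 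The constants $C,\dd$ of the statement are then a cosmetic relabeling of the working constants.

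The main obstacle is \eqref{p4}. Every cube $Q_\alpha^k$ has at least one distinguished child whose center is close to $x_\alpha^k$; by the inner part of \eqref{p5} this child contains a ball $B(\cdot,\dd^{k+1})$, while $Q_\alpha^k\subset B(x_\alpha^k,C\dd^k)$, so doubling applied a bounded number of times $N\sim\log(C/\dd)$ bounds $|Q_\alpha^k|\leq C_d^N|B(\cdot,\dd^{k+1})|\leq C_d^N|\text{child}|$. The subtlety is that this argument must apply to \emph{every} child, not only the distinguished one: each level-$(k+1)$ center which is a child of $x_\alpha^k$ must itself sit far from the boundary of $Q_\alpha^k$ in the metric sense. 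One handles this by observing that each child-cube contains a ball of radius $\dd^{k+1}$ around its own center (again by \eqref{p5} applied at level $k+1$) and that this ball sits inside $Q_\alpha^k\subset B(x_\alpha^k,C\dd^k)$, allowing the same doubling argument. The delicate part is choosing $\dd$ small enough that the geometric series in the outer inclusion converges, the Voronoi-type assignment in the inner inclusion is consistent across all scales, and the exponent $N$ in the doubling iteration stays bounded --- all simultaneously; this balance is precisely the content of Christ's original argument.
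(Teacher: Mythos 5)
This theorem is not proven in the paper --- it is cited as a result of Christ \cite{C} and Hyt\"onen--Kairema \cite{HK}, so there is no in-paper proof to compare against. Your sketch correctly reproduces the standard construction from those references: maximal $\dd^k$-separated nets as centers, a parent map on centers minimizing distance (with ties broken by a fixed ordering), cubes as descendant sets, \eqref{p5} from the geometric series and separation, and \eqref{p4} from a bounded number of doublings. As a blind reconstruction this is the right approach.

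Two points, though, are claimed to be ``immediate'' but are in fact the real technical content. First, \eqref{p1} does not follow merely from the covering property of a maximal net: the dyadic cubes must be pairwise disjoint, so boundary/ambiguous points are discarded, and one must \emph{prove} that this exceptional set has measure zero. This is a separate lemma in \cite{C}, proved by a quantitative thin-boundary estimate using doubling; without it, \eqref{p1} is unjustified. Second, the inner inclusion in \eqref{p5} with radius exactly $\dd^k$ cannot be extracted directly from a $\dd^k$-separated net --- separation together with the quasitriangle constant $C_0$, plus the consistency of Voronoi assignment across all finer scales, only produces an inner radius $c\,\dd^k$ for a small $c=c(C_0,\dd)<1$. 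One either states the theorem with the inner radius $c\,\dd^k$ (as in \cite{HK}), or builds the nets at a dilated scale and relabels constants; the statement as written in this paper implicitly uses such a normalization. Finally, to make \eqref{p3} (existence of a child) immediate rather than an additional argument, one should take the nets \emph{nested}, $\{x_\alpha^k\}_\alpha\subset\{x_\beta^{k+1}\}_\beta$, so that each center is its own child; you gesture at this but do not commit to it. None of these are wrong turns, but a complete proof must supply them --- they are precisely what \cite{C} and \cite{HK} provide.
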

\begin{definition}\label{level}
For a $Q\in S$ we'll denote by $k(Q)\in Z$ the only integer for which $Q\in S_{k(Q)}$.
\end{definition}
\begin{definition}\label{localized}
For a.e. $x\in X$ for any $k\in Z$ there exists a unique set denoted by $Q_k(x)$ for whom $x\in Q_k(x)\in S_k$.
\end{definition}

We emphasize that though much of the dyadic terminology is the same for homogeneous spaces and with Euclidean space, the usual concepts of centers and side lengths, and dilations of cubes do not exist as in Euclidean spaces.
\begin{definition}\label{dylation}
Let $S$ be a dyadic decomposition of the space $X$ and $Q\in S_k$. For a number $\lambda>1$ we'll denote by $\lambda Q$ the "dilated" set 
\begin{equation*}
\lambda Q=B(x_c(Q), \lambda C \delta^k),
\end{equation*}
where the constant $C$ is the one provided by \ref{dyadic}.
The set $\lambda Q$ will play the role of the dilation of the set $Q$ by the number $\lambda$. However, note that even in the Euclidean case, $\lambda Q$ is not necesarilty a dilation with respect to the center of $Q$. \end{definition}
\begin{remark}\label{a2dyadic}
From \ref{p5} and the fact that the measure is doubling it follows that if $w$ is an $A_2$ weight then for any $Q\in S$,
\begin{equation*}
\fint_Q \omega\fint_Q \omega^{-1}\leq D_{0} [\omega]_{A_2}
\end{equation*}
where $D_{0}$ is an absolute constant.
\end{remark}
Let $S$ be a dyadic decomposition of $X$.
Fix $Q_0\in  S$ and a measurable function $f:Q_0\rightarrow \R$. In the rest of this section we'll construct the maximal decomposition of $f$ (stated in \ref{maximaldecomposition}).
For convenience denote $f_{m}=|f-m(f,Q_0)|$.
\begin{definition}
Denote by $M$ those elements $Q$ of $S$ that lie in $Q_0$ and that are maximal  under inclusion with respect to the following property:
\begin{equation*}
m(f_{m},Q)>\omega_{\ee/4}(f_{m},Q_0),
\end{equation*}
(where $0<\epsilon<1$ is the number provided by theorem \ref{dyadic}).
\end{definition}
Let $\hat{M}$ be the set of the maximal elements under inclusion of the set of parents of $M$. Note that the elements of $\hat{M}$ are disjoint and for $Q\in \hat{M}$:
\begin{equation}\label{prop2}
m(f_{m},Q)\leq\omega_{\ee/4}(f_{m},Q_0) 
\end{equation}
Also,
\begin{lemma}
We have the following:
\begin{equation}
\label{prop1}
\sum_{Q\in \hat{M}}|Q|\leq \frac{|Q_0|}{2}
\end{equation}
\end{lemma}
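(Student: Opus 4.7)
The plan is to bound $\sum_{Q \in \hat{M}} |Q|$ by first bounding $\sum_{Q \in M} |Q|$ via a distributional argument, then transferring to parents using the doubling-type property \eqref{p4}. Set $a := \omega_{\ee/4}(f_m, Q_0)$ for brevity. The key distributional fact is that
\[
\bigl|\{x \in Q_0 : f_m(x) > a\}\bigr| \leq \tfrac{\ee}{4}\,|Q_0|.
\]
This follows from the definition of $\omega_{\ee/4}$: for every $b > a$ we have $\mu_{Q_0}(f_m < b) > 1 - \ee/4$, and letting $b \downarrow a$ (using continuity of measure from above on the decreasing intersection $\{f_m \leq a\} = \bigcap_n \{f_m < a + 1/n\}$) yields $\mu_{Q_0}(f_m \leq a) \geq 1 - \ee/4$.

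Next, for each $Q \in M$ the defining inequality $m(f_m, Q) > a$ combined with the median property from Lemma \ref{mfisobtained} gives
\[
\bigl|\{x \in Q : f_m(x) > a\}\bigr| \geq \tfrac{1}{2}\,|Q|,
\]
since $\{f_m \leq a\} \subseteq \{f_m < m(f_m, Q)\}$ on $Q$. The elements of $M$ are pairwise disjoint (any two maximal $S$-cubes in $Q_0$ are disjoint by \eqref{p2}), and they are all contained in $Q_0$, so summing the last inequality and using the first display gives
\[
\tfrac{1}{2}\sum_{Q \in M} |Q| \leq \bigl|\{x \in Q_0 : f_m > a\}\bigr| \leq \tfrac{\ee}{4}\,|Q_0|,
\]
that is, $\sum_{Q \in M} |Q| \leq \frac{\ee}{2}\,|Q_0|$.

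Finally, I transfer this bound from $M$ to $\hat M$. For each $P \in \hat M$, by construction $P$ is the parent of at least one element of $M$, so choose any such child and call it $Q_P \in M$. Because every element of $M$ has a unique parent, the map $P \mapsto Q_P$ is injective, so the $Q_P$'s form a subcollection of $M$. Property \eqref{p4} gives $|Q_P| \geq \ee\,|P|$, and therefore
\[
\sum_{P \in \hat M} |P| \leq \frac{1}{\ee}\sum_{P \in \hat M} |Q_P| \leq \frac{1}{\ee}\sum_{Q \in M} |Q| \leq \frac{1}{\ee}\cdot\frac{\ee}{2}\,|Q_0| = \frac{|Q_0|}{2}.
\]
The only subtlety is choosing the parameter $\ee/4$ in the definition of $M$ so that the two applications of $\ee$ (once in the distributional bound on $\{f_m > a\}$, once in \eqref{p4}) exactly balance to produce the factor $1/2$; beyond this, each step is a direct appeal to the definitions and to the listed properties of the median, $\omega_\lambda$, and the dyadic decomposition.
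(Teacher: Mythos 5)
Your proof is correct and follows essentially the same route as the paper: bound $\sum_{Q\in M}|Q|$ by comparing the local median condition on each $Q\in M$ against the global distributional bound $|\{f_m>\omega_{\ee/4}(f_m,Q_0)\}|\leq\tfrac{\ee}{4}|Q_0|$, then transfer from $M$ to $\hat M$ via property \eqref{p4}. You spell out a few steps that the paper compresses (the continuity-from-above argument, and the injectivity of the $P\mapsto Q_P$ assignment), but the underlying argument and the way the two occurrences of $\ee$ cancel are identical.
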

\begin{proof}
For $Q\in M$ we have that $m(f_{m},Q)>\omega_{\ee/4}(f_{m},Q_0)$, so that
\begin{equation*}
\frac{|Q|}{2}\leq |\lbrace x\in Q\colon f_{m}(x)\geq m(f_{m},Q)     \rbrace|   \leq    |\lbrace x\in Q\colon f_{m}(x)> \omega_{\ee/4}(f_{m},Q_0)     \rbrace|.
\end{equation*}
where the first inequality is by \ref{mfplus}.
The elements of $M$ are disjoint, so summing over these elements we get
\begin{equation*}
\frac{1}{2}\sum_{Q\in M}|Q|\leq  |\lbrace x\in Q_0\colon f_{m}(x)> \omega_{\ee/4}(f_{m},Q_0)   \rbrace|\leq \ee\frac{|Q_0|}{4}
\end{equation*}
where the second inequality is due to the analogue of \ref{mfplus} for $\omega_{\lambda}$.
By \ref{p4},
 \[\frac{\ee}{2}\sum_{Q\in \hat{M}}|Q|\leq \frac{1}{2}\sum_{Q\in M}|Q|\leq\frac{\ee|Q_0|}{4}.\]
\end{proof}
\begin{remark}
Note that  \ref{prop1} implies that $Q_0\notin \hat{M}$. 
\end{remark}
\begin{remark}
\label{decompositionstart}
For any $x\in Q_0$ we have the following decomposition 
\begin{equation*}
f(x)-m(f,Q_0)=(f(x)-m(f,Q_0))\chi_{Q_0\setminus\cup_{Q\in \hat{M}}Q}(x)+\end{equation*}\begin{equation*}+\sum_{Q\in \hat{M}}\left(f(x)-m(f,Q)\right)\chi_{Q}(x)+\sum_{Q\in \hat{M}}(m(f,Q)-m(f,Q_0))\chi_{Q}. 
\end{equation*}
\end{remark}
We'll be estimating parts of that decomposition. We first need the following results.
\begin{theorem}
\cite{T} We call $x$ a \emph{Lebesgue point} if $\fint_{B(x,r)}|f(x)|\to f(x)$ as $|B|\to 0$ (note that $|\cdot |$ is the homogeneous measure).  If $f\in L^1(X)$ then almost every point is Lebesgue.
\end{theorem}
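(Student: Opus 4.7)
The plan is to reduce the claim to the weak $(1,1)$ bound for the Hardy-Littlewood maximal function $M$ already cited in the excerpt, using the standard approximation strategy. Define the upper oscillation
\[ Of(x):=\limsup_{r\to 0^+}\fint_{B(x,r)}|f(y)-f(x)|. \]
The conclusion $\fint_{B(x,r)}|f|\to |f(x)|$ follows from $Of\equiv 0$ almost everywhere, by the triangle inequality $\big|\fint_B|f|-|f(x)|\big|\leq \fint_B|f-f(x)|$, so it suffices to prove that $\{Of>0\}$ is a null set.

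Next I would establish two routine pointwise estimates. First, for any $h\in L^1(X)$,
\[ Oh(x)\leq Mh(x)+|h(x)|, \]
so $Oh$ is measurable and finite almost everywhere, and $O$ is subadditive. Second, for continuous compactly supported $g$ one has $Og\equiv 0$ pointwise, since $|g(y)-g(x)|$ is uniformly small on sufficiently small balls around any $x$.

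The final step is approximation: given $f\in L^1(X)$, $\lambda>0$, and $\eta>0$, pick continuous compactly supported $g$ with $\|f-g\|_{L^1(X)}<\eta$ and split $f=g+(f-g)$. Subadditivity of the limsup, together with the two estimates above, yields
\[ Of(x)\leq O(f-g)(x)\leq M(f-g)(x)+|f(x)-g(x)|. \]
Combining the weak $(1,1)$ bound on $M$ with Chebyshev's inequality applied to $|f-g|$ gives $|\{Of>\lambda\}|\leq C\eta/\lambda$. Letting $\eta\to 0$ and then $\lambda\to 0$ forces $|\{Of>0\}|=0$, as desired.

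The main obstacle is producing the dense class of "good" functions. One needs continuous compactly supported functions to be dense in $L^1(X)$, which in turn requires that the balls be open. This is secured by passing to the Macias-Segovia quasimetric $\rho'$ cited earlier in the excerpt, which is equivalent to $\rho$ and renders all balls open without altering the underlying topology or the class of integrable functions. Once this topological setup is in place, the remaining steps are entirely parallel to the Euclidean proof.
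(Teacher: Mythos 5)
The paper does not prove this statement; it is cited verbatim from Toledano \cite{T}, so there is no internal proof for you to match. Your plan is the classical Stein argument (oscillation operator $O$, the bound $Oh\leq Mh+|h|$, $Og\equiv0$ on a dense class, Chebyshev plus the weak $(1,1)$ maximal bound), and the skeleton of that argument is sound in the homogeneous setting. However, you have correctly located the crux and then dispatched it too quickly. Passing to the Macias--Segovia quasimetric $\rho'$ makes balls open and the topology metrizable, but that alone does not yield density of continuous compactly (or boundedly) supported functions in $L^1(X)$. One additionally needs the measure $|\cdot|$ to be Borel regular (inner regular by closed sets, outer regular by open sets) so that $L^1$ functions can be approximated by simple functions built from closed/open sets and these in turn by Lipschitz bump functions. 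This regularity is not a hypothesis in Definition~\ref{homogeneous}; it has to be derived, typically from the facts that a doubling measure forces separability of the metric space and that locally finite Borel measures on separable metric spaces are Borel regular. You should also replace ``compactly supported'' by ``supported in a ball'': a space of homogeneous type need not be locally compact, so compact support is not the right notion, though bounded support works perfectly well in the Chebyshev step.

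This gap is precisely the subtlety Toledano's note addresses: earlier treatments of the differentiation theorem in homogeneous spaces silently invoked density of continuous functions, and \cite{T} is devoted to closing or circumventing that step. So your proposal, while reasonable in outline, reproduces the very hole the cited reference exists to fill. To make the argument self-contained you would need either (i) to prove the regularity of the doubling measure and the density of Lipschitz functions with bounded support, or (ii) to switch to a covering-lemma or Calder\'on--Zygmund decomposition route that avoids density altogether, which is closer in spirit to what the cited reference does.
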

\begin{lemma}
\label{LDT2}
Let $x\in Q_0$ be a Lebesgue point of $f$,  then
 \[\frac{1}{|Q_k(x)|}\int_{Q_k(x)}|f-f(x)|\rightarrow 0, \quad \text{ as } k\rightarrow +\infty,\] 
 where $Q_k(x)$ is defined in \ref{localized}.
\end{lemma}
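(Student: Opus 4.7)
The plan is to transfer the Lebesgue differentiation property from small metric balls (where it holds by hypothesis) to the dyadic cubes $Q_k(x)$, using only the sandwiching property \eqref{p5} from Theorem \ref{dyadic} together with the doubling of $|\cdot|$.

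First, invoke \eqref{p5} to write $B(x_c(Q_k(x)),\delta^k)\subset Q_k(x)\subset B(x_c(Q_k(x)),C\delta^k)$. Since $x\in Q_k(x)$, the second inclusion yields $\rho(x,x_c(Q_k(x)))\leq C\delta^k$, and the quasitriangle inequality upgrades this to the single-center containment $Q_k(x)\subset B(x,2CC_0\delta^k)$.

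Next, I compare measures. Again by the quasitriangle inequality, $B(x,\delta^k)\subset B(x_c(Q_k(x)),C_0(C+1)\delta^k)$, so iterating the doubling property a fixed number of times (depending only on $C$, $C_0$, $C_d$) gives
\[|B(x,2CC_0\delta^k)|\leq C_d^{N}|B(x_c(Q_k(x)),\delta^k)|\leq C_d^{N}|Q_k(x)|.\]
Combining with the first step,
\begin{align*}
\frac{1}{|Q_k(x)|}\int_{Q_k(x)}|f-f(x)| &\leq \frac{|B(x,2CC_0\delta^k)|}{|Q_k(x)|}\cdot\frac{1}{|B(x,2CC_0\delta^k)|}\int_{B(x,2CC_0\delta^k)}|f-f(x)|\\
&\leq C_d^{N}\cdot\frac{1}{|B(x,2CC_0\delta^k)|}\int_{B(x,2CC_0\delta^k)}|f-f(x)|.
\end{align*}
Since $\delta<1$ (implicit in the dyadic construction, as $\delta^k$ is the scale of a level-$k$ cube), $2CC_0\delta^k\to 0$ as $k\to+\infty$, and the right-hand side vanishes by the Lebesgue point hypothesis at $x$.

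The main obstacle is the measure comparison in the second step: the balls sandwiching $Q_k(x)$ in \eqref{p5} are centered at $x_c(Q_k(x))$, not at $x$, so one must re-center via the quasitriangle inequality and absorb several factors of $C_d$ before the enclosing ball can be controlled by $|Q_k(x)|$. Once that bookkeeping is done, the convergence is a direct consequence of the Lebesgue point assumption and $\delta^k\to 0$.
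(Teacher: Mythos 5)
Your proof is correct and takes the same route the paper sketches: sandwich $Q_k(x)$ between balls via \eqref{p5}, re-center around $x$ using the quasitriangle inequality, compare measures by iterating doubling, and invoke the Lebesgue point hypothesis as $\delta^k\to 0$. The paper's version is only a one-line outline ("by applying the doubling property, we can arrive at the result"), so you are supplying precisely the bookkeeping it leaves to the reader.
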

\begin{proof}
To prove this, we use a few key facts.  If $k$ is sufficiently large then $Q_k(x)\subset Q_0$. Since
\begin{equation*}
B(x_c(Q_k),\dd ^k)\subseteq Q_k(x)\subseteq B(x_c(Q_k),C\dd^k),\end{equation*} by applying the doubling property, we can arrive at the result.
\end{proof}
The following lemma estimates the first term in \ref{decompositionstart}.
\begin{lemma}\label{firsttermest}
For a.e. $x\in Q_0\setminus\cup_{Q\in \hat{M}}Q$
\begin{equation*}
|f(x)-m(f,Q_0)|\leq \omega_{\ee/4}(f_{m},Q_0) 
\end{equation*}
\end{lemma}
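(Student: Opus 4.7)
The plan is to use the construction of $\hat M$ together with Lebesgue differentiation for the median to reduce the inequality to a pointwise limit argument.

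First I would restrict attention to Lebesgue points $x \in Q_0 \setminus \bigcup_{Q\in\hat M}Q$, which form a full-measure subset by the theorem of \cite{T}. For such an $x$ and for all $k$ large enough, $Q_k(x) \subseteq Q_0$ by Lemma \ref{LDT2} (or just by nesting of the dyadic grids). The heart of the argument is to show that for every such $k$,
\[
m(f_{m},Q_k(x)) \;\leq\; \omega_{\ee/4}(f_{m},Q_0).
\]

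To see this, I claim that no ancestor of $Q_k(x)$ that is contained in $Q_0$ can lie in $M$. Indeed, suppose $Q_j(x) \in M$ for some $j \leq k(Q_0)^{-1}$ ancestor chain index. Then its parent $Q_{j-1}(x)$ is by definition a parent of an element of $M$, hence is contained in some maximal parent, i.e. in some $Q' \in \hat M$. But then $x \in Q_{j-1}(x) \subseteq Q' \in \hat M$, contradicting $x\notin \bigcup_{Q\in\hat M}Q$. In particular $Q_k(x) \notin M$ itself. By the maximality defining $M$, the only way for $Q_k(x) \subseteq Q_0$ to fail to belong to $M$ while having no $M$-ancestor inside $Q_0$ is that it violates the selection condition, i.e. $m(f_{m},Q_k(x)) \leq \omega_{\ee/4}(f_{m},Q_0)$, which is what we wanted.

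Now I let $k \to +\infty$. Lemma \ref{LDT2} gives $\fint_{Q_k(x)} |f - f(x)| \to 0$, which also implies $\fint_{Q_k(x)}|f_m - f_m(x)| \to 0$ since $f_m$ differs from $|f|$ by a constant shift inside $|\cdot|$. Applying Lemma \ref{LDT} with $\lambda = 1/2$ (so that $\omega_\lambda = m$) to the function $f_m$ on the sets $Q_k(x)$ yields
\[
m(f_{m},Q_k(x)) \;\longrightarrow\; f_{m}(x) \;=\; |f(x)-m(f,Q_0)|.
\]
Passing to the limit in the inequality above gives the claim.

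The only subtle point I would want to double-check carefully is the ancestor argument in step two, i.e. the exact interplay between ``maximal among parents of $M$'' (the definition of $\hat M$) and ``belonging to $M$'': the key observation is simply that the parent of any cube in $M$ is, by construction, nested inside some element of $\hat M$, so the complement of $\bigcup \hat M$ inside $Q_0$ contains no cube of $M$ and no descendants of such cubes. Everything else is a direct invocation of the median lemmas from Section 3.
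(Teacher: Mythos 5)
Your proposal is correct and uses the same essential ingredients as the paper's proof: the Lebesgue-differentiation result for the median (Lemma \ref{LDT} via \ref{LDT2}) and the observation that if $Q_k(x)$ satisfied $m(f_m,Q_k(x))>\omega_{\ee/4}(f_m,Q_0)$ then $Q_k(x)$ would sit inside an element of $M$, hence inside an element of $\hat M$, forcing $x\in\bigcup_{Q\in\hat M}Q$. The only (cosmetic) difference is that the paper argues by contradiction from the assumption $f_m(x)>\omega_{\ee/4}(f_m,Q_0)$, whereas you first establish the uniform bound $m(f_m,Q_k(x))\leq\omega_{\ee/4}(f_m,Q_0)$ for all large $k$ and then pass to the limit; both routes are valid and rest on the same nesting and differentiation arguments.
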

\begin{proof}
For an $x\in Q_0\setminus\cup_{Q\in \hat{M}}Q$, we have that $f_m(x)>\omega_{\ee/4}(f_{m},Q_0)$.  Since $x$ is a Lebesgue point for $f_m$, then by \ref{LDT2}, \[m(f_m,Q_{k}(x))>\omega_{\ee/4}(f_{m},Q_0) \text{ for a sufficiently large }k,\]  implying that $Q_k(x)$ is a subset of an element of $M$, thus a subset of an element of $\hat{M}$.  In particular, $x\in \cup_{Q\in \hat{M}}Q$, a contradiction.
\end{proof}
\begin{remark}\label{secondtermest}
Also, using \ref{prop2} we get that for any $Q\in \hat{M}$: \[|m(f,Q)-m(f,Q_0)|=|m(f-m(f,Q_0),Q)|\leq m(f_{m},Q)\leq \omega_{\ee/4}(f_{m},Q_0)\]
\end{remark}
\begin{remark}\label{beforeinduction}
Using the  estimates \ref{secondtermest}, \ref{firsttermest} in \ref{decompositionstart} we get that for a.e. $x\in Q_0$:
\begin{equation*}
|f(x)-m(f,Q_0)|\leq M^{\#}_{\ee/4,Q_0}(f)(x)\chi_{Q_0\setminus\cup_{Q\in \hat{M}}Q}(x)+\omega_{\ee/4}(|f-m(f,Q_0)|,Q_0)\chi_{Q_0}(x)+
\sum_{Q\in \hat{M}}|f-m(f,Q)|\chi_{Q}(x),
\end{equation*}
where
 \[M^{\#}_{\lambda,Q_0}(f)(x) = \sup\lbrace \omega_{\lambda}(|f-m(f,Q)|,Q)\colon\; x\in Q\subset Q_0, Q\in S   \rbrace,\] following Lerner's notation.
\end{remark}
\begin{remark}\label{maximaldecomposition}
Applying the estimate of \ref{beforeinduction} inductively (to the last term in the sum) and using \ref{prop1} we get that for  a.e. $x\in Q_0$:
\begin{equation*}
|f(x)-m(f,Q_0)|\leq M^{\#}_{\ee/4,Q_0}(f)(x)
+\sum_{Q\in S(Q_0)}   \omega_{\ee/4}(|f-m(f,Q)|,Q)  \chi_Q(x),
\end{equation*}
where the sum is spread over a family of sets $S(Q_0)$ which is sparse in $Q_0$ with respect to the dyadic decomposition $S$, that is \begin{equation*}S(Q_0)=\bigcup_{n=0,1,\dots}C_n\end{equation*} (where $n$ represents the step in the induction), so that:
\newline
1. Each element of $S(Q_0)$ belongs to $S$ and is a subset of $Q_0$.
\newline
2. The elements of each family $C_n$ are disjoint.
\newline
2. $C_0=\lbrace Q_0 \rbrace$.
\newline
3. If $n>0$ then each $Q\in C_n$ is a subset of an element of $C_{n-1}$
\newline
5. For any $Q_1\in C_n$ we have that: \begin{equation}\label{half} \left|\bigcup_{Q\in C_{n+1}} Q\cap Q_l\right|\leq \frac{|Q_1|}{2}.\end{equation}
Note how this relates to \ref{prop1}.
\end{remark}
\section{Discretization of Operator}
In this section, we follow Lerner's strategy in \cite{L2} in order to reduce estimates of a Calderon-Zygmund operator to the case of a discrete operator.
\begin{lemma}\label{linearization}
Let $T$ be a Calderon-Zygmund operator (see \ref{calderon}), $Q\in S$ be an element of a dyadic decomposition $S$ of the space $X$ (see \ref{dyadic}) and $0<\epsilon<1$ be a constant provided by \ref{dyadic}. We then have the following:
\begin{equation*}
\omega_{\ee/4} (|Tf-m(Tf,Q)|,Q)\leq D_1 \sum_{l=1}^{\infty} \frac{1}{2^{l\eta}}\fint_{2^l Q} |f(y)|dy,
\end{equation*}
where $2^l Q$ is defined in \ref{dylation} and $D_1$ is an absolute constant.
\end{lemma}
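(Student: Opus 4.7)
The plan is to carry out the standard Lerner--Hyt\"onen two-part splitting: write $f=f_1+f_2$ with $f_1=f\chi_{2Q}$ and $f_2=f-f_1$. The local piece $Tf_1$ is controlled using the weak $(1,1)$ bound for $T$ together with Lemma~\ref{wtoweak}, while $Tf_2$ is \emph{almost constant} on $Q$ by the H\"older smoothness \eqref{smoothness} of the kernel, which produces the geometric series $\sum_l 2^{-l\eta}\fint_{2^lQ}|f|$.

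More precisely, since $\ee/4<1/2$, Lemma~\ref{changingconstantw} lets us replace the median by an arbitrary constant $c$ at the cost of a factor of~$2$:
\[
\omega_{\ee/4}(|Tf-m(Tf,Q)|,Q)\leq 2\,\omega_{\ee/4}(|Tf-c|,Q).
\]
I would take $c=Tf_2(x_0)$ for a Lebesgue point $x_0\in Q$ of $Tf_2$ (such a point exists because $f_2$ is supported outside $2Q$, so the defining integral for $Tf_2$ is absolutely convergent on a neighborhood of $Q$). Writing $Tf-c=Tf_1+(Tf_2-Tf_2(x_0))$ and applying Lemma~\ref{wtosup}, one obtains
\[
\omega_{\ee/4}(|Tf-c|,Q)\leq \|Tf_2-Tf_2(x_0)\|_{L^\infty(Q)}+\omega_{\ee/4}(|Tf_1|,Q).
\]

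For the local term, Lemma~\ref{wtoweak}, the Coifman--Weiss weak $(1,1)$ bound for $T$, and doubling yield
\[
\omega_{\ee/4}(|Tf_1|,Q)\leq \frac{4}{\ee |Q|}\|Tf_1\|_{1,\infty}\leq \frac{C}{|Q|}\int_{2Q}|f|\leq D\fint_{2Q}|f|,
\]
which is absorbed into the $l=1$ term of the target sum. For the non-local term, I would decompose $X\setminus 2Q=\bigcup_{l\geq 1}(2^{l+1}Q\setminus 2^{l}Q)$ and bound pointwise for every $x\in Q$,
\[
|Tf_2(x)-Tf_2(x_0)|\leq \sum_{l\geq 1}\int_{2^{l+1}Q\setminus 2^{l}Q}|K(x,y)-K(x_0,y)|\,|f(y)|\,dy.
\]
Using \eqref{p5} and the quasi-triangle inequality, one checks that $\rho(x,x_0)\lesssim \delta^{k(Q)}$ while $\rho(x_0,y)\gtrsim 2^l\delta^{k(Q)}$ for $y$ in the $l$-th annulus, so the smoothness condition applies and \eqref{smoothness} gives $|K(x,y)-K(x_0,y)|\lesssim 2^{-l\eta}/|B(x_0,\rho(x_0,y))|$. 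Doubling then replaces the denominator by $|2^lQ|$, and summing in $l$ gives the required series.

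The main technical obstacle is the quasi-metric bookkeeping. Because the quasi-triangle inequality carries the constant $C_0$, the hypothesis $\rho(x,x_0)\leq \eta\rho(x_0,y)$ needed for \eqref{smoothness} only becomes valid once $2^l$ exceeds a fixed threshold depending on $C_0$, $C$ and $\eta$; for the finitely many small-$l$ annuli one must fall back on the size bound \eqref{decay} and absorb those contributions into a constant multiple of the first few terms of the target sum. A secondary subtlety is that the dilated cube $2^lQ$ is defined in Definition~\ref{dylation} as a ball centered at $x_c(Q)$ rather than at $x_0$, so each comparison between $B(x_0,\rho(x_0,y))$ and $2^lQ$ requires invoking \eqref{p5} and the doubling constant $C_d$ a bounded (but careful) number of times.
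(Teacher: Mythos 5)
Your proposal is correct and follows essentially the same strategy as the paper: split $f$ into a local part $f_1$ and a far part $f_2$ supported away from $Q$, subtract a representative value of $Tf_2$, control $\omega_{\ee/4}(|Tf_1|,Q)$ by the weak-$(1,1)$ bound for $T$ together with Lemma~\ref{wtoweak}, and bound the far part pointwise by the kernel smoothness, summing the resulting geometric series. The one substantive difference is the choice of excision set. You take $f_1=f\chi_{2Q}$, and correctly flag that the hypothesis $\rho(x_0,x)\leq\eta\rho(x_0,y)$ for \eqref{smoothness} then fails for the first few annuli, forcing you to fall back on the size bound \eqref{decay} there; that works, but adds a case split and some quasi-triangle bookkeeping. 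The paper sidesteps this entirely by excising $Q^*=\frac{1}{\eta}Q=B(x_c,\frac{C}{\eta}\delta^{k(Q)})$ and using the dyadic center $x_c=x_c(Q)$ as the base point: then for every $x\in Q$ and $y\notin Q^*$ one has $\rho(x,x_c)<C\delta^{k(Q)}\leq\eta\rho(x_c,y)$ directly from \eqref{p5}, so \eqref{smoothness} applies from the very first annulus with no threshold and no quasi-triangle inequality needed. Using $x_c$ as both the kernel base point and the center of the dilates $2^lQ$ also eliminates the secondary subtlety you mention about comparing $B(x_0,\rho(x_0,y))$ with $2^lQ$. One small red herring in your write-up: there is no need to select a Lebesgue point $x_0$; since $f_2$ vanishes on a full neighborhood of $Q$, the integral defining $Tf_2(x)$ is absolutely convergent and continuous for all $x\in Q$, so any fixed point of $Q$ (the paper uses $x_c\in Q$) works immediately.
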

\begin{proof}
For compactness denote $x_c=x_c(Q)$ (see \ref{localized}) and $k=k(Q)$ (see \ref{level}).   By \ref{dylation}, define  $Q^*=\frac{1}{\eta}Q=B(x_c,\frac{C}{\eta}\delta^k)$where $\eta$ is the constant provided by the smoothness condition for $T$ (see \ref{calderon}). For a function $f$ denote
\begin{equation*}
f_1=f\cdot \chi_{Q^*} \text{ and } f_2=f\cdot\chi_{X\setminus Q^*}
\end{equation*}
By \ref{changingconstantw} and \ref{wtosup} we have
\begin{equation}\label{woperator}
\omega_{\ee/4} (|Tf-m(Tf,Q)|,Q) \leq  2\omega_{\ee/4} (| Tf-(Tf_2)(x_c)|,Q) \end{equation}\begin{equation*}\leq 2\omega_{\ee/4} (| Tf_1+Tf_2-(Tf_2)(x_c)|,Q)\leq 
2\omega_{\ee/4}(|Tf_1|,Q)
 +2\|(Tf_2)(x)-(Tf_2)(x_c)\|_{L_\infty(Q)}
\end{equation*}
For the first term in the right side of \ref{woperator} note that the operator $T$ is of weak type $(1,1)$, thus by \ref{wtoweak}.:
\begin{equation*}
\omega_{\ee/4}(|Tf_1|,Q)\leq \frac{4||Tf_1||_{1,\infty}}{\epsilon |Q|}\leq\frac{4||T||_{1,\infty}}{\epsilon}\cdot\frac{||f_1||_1}{|Q|}\leq \frac{4||T||_{1,\infty}}{\epsilon}\cdot \frac{|Q^*|} {|Q|}\fint_{Q^*}|f|
\end{equation*}
Note that by \ref{p5},
\[
\frac{|Q^*|}{|Q|}\leq \frac{|B(x_c,C\delta^k/\eta)|}{|B(x_c,\delta^k)|},
\]
which is bounded by an absolute constant due to the doubling property.
\newline
In second term in the right side of \ref{woperator}, for $x\in Q$ we have:
\begin{equation*}
\rho(x,x_c)< C\delta^k
\end{equation*}
for $y\in X\setminus Q^*$ we have:
\begin{equation}\label{farfar}
\frac{C}{\eta}\delta^k\leq  \rho(x_c,y),
\end{equation}
and due to \ref{farfar} there exists unique $l> 1$ so that:
\begin{equation*}
2^{l-1}\dd^k\leq \rho(x_c,y)\leq 2^l \dd^k.
\end{equation*}
We can then apply the kernel estimate
\begin{eqnarray*}
|K(x,y)-K(x_c,y)|&\leq & \left(\frac{\rho(x,x_c)}{\rho(x_c,y)}\right)^{\eta}\frac{1}{|B(x_c,\rho(x_c,y))|}\\&\leq
&\frac{(2C)^{\eta}}{2^{l\eta}}\cdot\frac{1}{|B(x_c,2^{l-1}\dd^k)|}\chi_{B(x_c,2^l\dd^k)}(y)\\&\leq &\frac{(2C)^{\eta}}{2^{l\eta}} \cdot  \frac{|B(x_c,2^{l}C\delta^k)|}{|B(x_c, 2^{l-1}\delta^k)|}\cdot\frac{1}{|2^ lQ|}\chi_{2^lQ}(y),
\end{eqnarray*}
where the second factor is bounded by an absolute constant due to the doubling property. Thus
\begin{eqnarray*}
|(Tf_2)(x)-(Tf_2)(x_c)|&\leq &\int_{X\setminus Q^*} |f_2(y)||K(x,y)-K(x_c,y)|dy \\&\leq &D_1\sum_{l=1}^{\infty} \frac{1}{2^{l\eta}|2^l Q|}\int_{X\setminus Q^*} |f_2(y)|\chi_{2^l Q}(y) dy\\ & = &D_1\sum_{l=1}^{\infty} \frac{1}{2^{l\eta}}\fint_{2^l Q} |f(y)|dy,
\end{eqnarray*}
for some absolute constant $D_1$.
\end{proof}

Let $T$ be a Calderon-Zygmund operator (see \ref{calderon}). Let $S$ be a dyadic decomposition of the space $X$ (see \ref{dyadic}). Fix an arbitrary $Q_0\in S$.
By \eqref{maximaldecomposition}  for a.e. $x\in Q_0$ we have:
\begin{equation*}
|(Tf)(x)-m(Tf,Q_0)|\leq M^{\#}_{\ee/4,Q_0}(Tf)(x)+\sum_{Q\in S(Q_0)} \omega_{\ee/4}(|Tf-m(Tf,Q)|,Q)\chi_{Q}(x).
\end{equation*}
By \eqref{linearization}
\begin{equation*}
|(Tf)(x)|\leq |m(Tf,Q_0)|+D_2\cdot M(f)(x)+D_1\sum_{k=1}^{\infty} \frac{1}{2^{k\eta}} \sum_{Q\in S(Q_0)} \fint_{2^k Q}|f|\chi_{Q}(x),
\end{equation*}
where $D_2$ is a certain constant depending on the operator $T$ (on the parameter $\tau$ to be precise).
\newline
Let $\omega$ be an $A_2$ weight.  Then
\begin{equation*}
||Tf||_{L_2(\omega,Q_0)}\leq D_2\cdot||M(f)||_{L_2(\omega,Q_0)}+D_1\cdot\sum_{k=1}^{\infty} \frac{1}{2^{k\eta}} \left|\left|\sum_{Q\in S(Q_0)} \fint_{2^k Q}|f|\chi_{Q}\right|\right|_{L_2(\omega)}+
||m(Tf,Q_0)||_{L_2(\omega,Q_0)}
\end{equation*}
We'll now estimate the last term. By \ref{wtoweak}:
\begin{equation*}
|m(Tf,Q_0)|\leq \frac{||Tf||_{L_{1,\infty}(Q_0)}}{|Q_0|}\leq ||T||_{1,\infty} \frac{||f||_{L_{1}(Q_0)}}{|Q_0|}
\end{equation*}
Applying Holder's inequality and recalling that  $[\omega]_{A_2}\geq 1$, we get:
\begin{equation}\label{lernermissing}
||m(Tf,Q_0)||_{L_2(w,Q_0)}\leq \frac{||T||_{1,\infty}}{|Q_0|}\left(\int_{Q_0}\omega\right)^{1/2}||f||_{L_{1}(Q_0)}\leq
 \end{equation}
 \begin{equation*}
||T||_{1,\infty} ||f||_{L_2(\omega,Q_0)}\left(\int_{Q_0}{\omega^{-1}}\right)^{1/2}\left(\int_{Q_0}{\omega}\right)^{1/2}\frac{1}{|Q_0|}=||T||_{1,\infty}||f||_{L_2(\omega,Q_0)} [\omega]_{A_2}^{1/2}\leq ||T||_{1,\infty}||f||_{L_2(\omega,Q_0)} [\omega]_{A_2}.
\end{equation*}

Thus, in order to prove \ref{maintheorem} (i.e. the linear $A_2$ bound for $T$) if suffices to show a linear $A_2$ bound for the discrete operator:
\begin{equation}\label{discreteoperator}
A_k(f)=\sum_{Q\in S(Q_0)} \fint_{2^k Q}|f|\chi_{Q},\quad k=1,2,\dots
\end{equation}
with a norm estimate depending linearly on the parameter $k$ ($k$ is called the complexity of the discrete operator $A_k$).  
This will be proved in the next section.
\indent

\section{Linear bound with respect to complexity of discrete operators}
By Theorem 4.1 in  \cite{HK} there exist  constants $J\in N$ and $D>0$ depending only the space $X$ and there exists a corresponding collection of dyadic decompositions $\lbrace S^j\colon j=1,2\dots, J\rbrace$ of the space $X$ such that for every ball $B(x,r)$ there exists some $1\leq j\leq J$ and $Q\in S^j$ with $B(x,r)\subset Q$ and $diam(Q)\leq Dr$.
 \newline
 Hence in order to prove a linear $A_2$ bound for the operator \ref{discreteoperator} it suffices to prove a linear $A_2$ bound, depending linearly on the complexity $k$, for the operators of the following form:
\begin{equation}\label{bidefinition}
B_{j,k}(f)=\sum_{Q\in S^{\prime}(Q_0)} \frac{1}{|Q^*|}\int_{Q^*}f \cdot\chi_Q\quad j=1,2,\dots, J
\end{equation}
The sum in the definition of $B_{j,k}$ is spread over $S^{\prime}(Q_0)$ which is a certain subset
of $S(Q_0)$ which has the following property:  for any $Q\in S^{\prime}(Q_0)$  there exists a set in $S^j$ denoted by $Q^*\in S^j$ so that 
\begin{equation}\label{mei1}
2^k Q=B(x_c(Q),2^k C\delta^{k(Q)})\subset Q^*
\end{equation}
 and 
 \begin{equation}\label{mei2}
 diam(Q^*)\leq D \cdot 2^k C \delta^{k(Q)},
 \end{equation} 
 where $k(Q)$ was defined in \ref{level}.  Thus $S'(Q_0) = \cup C_n'$ where some levels may be empty.
\newline
In the future we'll write $B$ instead of $B_{j,k}$, supressing the dependence on the parameters $j$ and $k$.
We'll also need the formal adjoint of $B$:
\begin{equation*}
B^*(f)=\sum_{Q\in S^{\prime}(Q_0)} \frac{1}{|Q^{\ast}|} \int_{Q} f \cdot\chi_{Q^{\ast}}
\end{equation*}
The rest of the paper will be devoted to the proof of a linear $A_2$ bound of the discrete operator $B$ depending linearly on the complexity $k$, proved in \ref{biitself}, thus completing the proof of \ref{maintheorem}.  We follow the outline from Lerner, but emphasize that due to the design of homogenous spaces, our argument in steps 3 and 4 of \ref{discreteoperatornorm} is different.  In \ref{howmedianshouldbetreated}, we give an argument for an estimate which is implicit, but not stated in \cite{L2}.
\begin{lemma}\label{eltwo}
For a constant $\gamma>0$ depending on the space $X$ only,
\begin{equation*}
||B(f)||_2=||B^*(f)||_2\leq \gamma||f||_2
\end{equation*}
\end{lemma}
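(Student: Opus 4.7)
The plan is to prove Lemma \ref{eltwo} by duality, combining the sparseness of $S'(Q_0)$ with the $L^2$-boundedness of the Hardy--Littlewood maximal operator $M$ on $X$. The point that makes the argument yield a constant independent of the complexity $k$ is that, although $|Q^*|/|Q|$ may grow polynomially in $2^k$ by doubling, the average $\frac{1}{|Q^*|}\int_{Q^*} f$ is controlled pointwise by $M(f)(x)$ at \emph{any} $x \in Q \subset Q^*$, not by $(|Q^*|/|Q|)$ times $\fint_Q f$.

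For each $Q \in S'(Q_0) \subset S(Q_0) = \bigcup_n C_n$ with $Q \in C'_n$, I would define
\[
E(Q) = Q \setminus \bigcup\{Q' \in C'_{n+1} : Q' \subset Q\}.
\]
Since $S'(Q_0) \subset S(Q_0)$, condition \eqref{half} from \ref{maximaldecomposition} (only strengthened by passing to the subfamily) yields $|E(Q)| \geq |Q|/2$. Moreover the sets $\{E(Q)\}_{Q \in S'(Q_0)}$ are pairwise disjoint: if $Q_1 \subsetneq Q_2$ with $Q_2 \in C'_n$, then tracing the parent chain of $Q_1$ up through the dyadic tree produces some $\tilde Q \in C'_{n+1}$ with $Q_1 \subset \tilde Q \subset Q_2$, and $\tilde Q$ is removed from $Q_2$ in the construction of $E(Q_2)$.

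Next, by duality and since $B, B^*$ are formal $L^2$-adjoints (so their operator norms agree), it suffices to bound $\langle B(f), g \rangle$ for nonnegative $f, g$ with $\|f\|_2 = \|g\|_2 = 1$. I would expand
\[
\langle B(f), g \rangle = \sum_{Q \in S'(Q_0)} \frac{|Q|}{|Q^*|}\left(\int_{Q^*} f\right)\frac{1}{|Q|}\int_Q g \leq 2 \sum_{Q} \left(\frac{1}{|Q^*|}\int_{Q^*} f\right) \left(\frac{1}{|Q|}\int_Q g\right) |E(Q)|,
\]
using $|Q| \leq 2|E(Q)|$. For every $x \in E(Q) \subset Q \subset Q^*$, the two averages are dominated by $M(f)(x)$ and $M(g)(x)$ respectively. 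Substituting and invoking the disjointness of the $E(Q)$ collapses the sum into a single integral, giving
\[
\langle B(f), g \rangle \leq 2\int_X M(f)(x)\, M(g)(x) \, dx \leq 2\|Mf\|_2\, \|Mg\|_2 \leq 2\|M\|_{2 \to 2}^2,
\]
where the last two steps use Cauchy--Schwarz and the $L^2$-boundedness of $M$ recalled in Section 2. This proves the lemma with $\gamma = 2\|M\|_{2 \to 2}^2$, an absolute constant depending only on $X$.

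There is essentially no obstacle once the sparseness from \ref{maximaldecomposition} and the maximal function bound are in hand; the only conceptual point worth highlighting, and the reason the estimate is uniform in the complexity $k$, is the pointwise domination $\frac{1}{|Q^*|}\int_{Q^*} f \leq M(f)(x)$ applied on the small set $E(Q)$ rather than any explicit estimate of the ratio $|Q^*|/|Q|$.
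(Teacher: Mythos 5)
Your proposal follows the same route as the paper: duality, a family of disjoint sets $E(Q) \subset Q$ of measure at least $|Q|/2$ obtained from the sparseness condition \eqref{half}, pointwise domination of the two averages by $Mf$ and $Mg$ on $E(Q)$, and then Cauchy--Schwarz together with the $L^2$-boundedness of $M$. The key conceptual remark (that $\fint_{Q^*}|f|\le Mf(x)$ for $x\in Q\subset Q^*$, so the constant is uniform in the complexity $k$) is exactly what the paper's argument implicitly uses.

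However, there is a real bug in your definition of $E(Q)$. You set, for $Q\in C'_n$,
\begin{equation*}
E(Q) = Q \setminus \bigcup\{Q' \in C'_{n+1} : Q' \subset Q\},
\end{equation*}
removing only the elements of the \emph{sub}family $C'_{n+1}$, whereas the paper removes the full generation $C_{n+1}$. Your choice indeed only increases $|E(Q)|$, so $|E(Q)|\ge |Q|/2$ remains true, but the pairwise disjointness argument you give no longer works: tracing the parent chain of $Q_1\in C'_m$, $m>n$, up through the tree $S(Q_0)$ produces an intermediate $\tilde Q\in C_{n+1}$ with $Q_1\subset\tilde Q\subset Q_2$, but there is no reason $\tilde Q$ should lie in $C'_{n+1}$. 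The subfamily $S'(Q_0)$ is obtained by selecting those $Q\in S(Q_0)$ for which the Mei-type covering cube $Q^*$ belongs to the fixed grid $S^j$, and this selection need not respect the tree structure: a descendant of $Q_2$ can lie in $S'(Q_0)$ while the intervening $C_{n+1}$-ancestor does not. In that case $Q_1$ is not deleted from $Q_2$, $E(Q_2)$ overlaps $Q_1 \supset E(Q_1)$, and the step $\sum_Q \int_{E(Q)} Mf\cdot Mg \le \int_X Mf\cdot Mg$ fails. The fix is simply to use the paper's definition $E(Q)=Q\setminus\bigcup_{Q_1\in C_{n+1}}Q_1$; then $|E(Q)|\ge|Q|/2$ holds by \eqref{half}, the $E(Q)$ are genuinely pairwise disjoint because $S(Q_0)$ is a tree, and the rest of your argument goes through verbatim.
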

\begin{proof}
For any $Q\in C_n'$ denote $E(Q)=Q \setminus \bigcup_{Q_1\in C_{n+1}} Q_1$. The sets $E(Q)$ are pairwise disjoint, and by \ref{half} we have: $|E(Q)|\leq |Q|/2$,  thus for any $f,g\in L_2(X)$:
\begin{equation*}
\int_X B(f)\cdot g\leq \sum_{Q\in S^{\prime}(Q_0)} \fint_{Q^*} |f|\cdot \fint_{Q} |g| \cdot |Q|  \leq\end{equation*}\begin{equation*} 2\sum_{Q\in S^{\prime}(Q_0)} \fint_{Q^*} |f|\cdot \fint_{Q} |g| \cdot |E(Q)|\leq 
2\sum_{Q\in S^{\prime} (Q_0)}\int_{E(Q)} (Mf\cdot Mg)\leq 2\int_{X} (Mf\cdot Mg)\leq \gamma ||f||_2||g||_2
\end{equation*}
which finishes the proof.
\end{proof}
\begin{lemma}\label{discreteoperatornorm}
There exists an absolute constant $\beta$ so that we have:
\begin{equation*}
||B^*(f)||_{1,\infty}\leq \beta k||f||_1,
\end{equation*}
where $B=B_{j,k}$, for all $1\leq j\leq J$ and $k\in N$.
\end{lemma}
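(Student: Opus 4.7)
The plan is to establish the weak-$(1,1)$ bound via a Calder\'on–Zygmund decomposition argument, following Lerner's Euclidean outline from \cite{L2} but carefully handling the fact that $Q\in S$ and $Q^*\in S^j$ belong to different dyadic grids. Without loss of generality assume $f\geq 0$, since $|B^*f|\leq B^*|f|$. Fix $\alpha>0$ and run the usual CZ stopping-time in the grid $S$ starting from $Q_0$: let $\{P_m\}\subset S$ be the maximal cubes with $\fint_{P_m}f>\alpha$. By maximality and \ref{p4}, $\alpha<\fint_{P_m}f\leq\alpha/\epsilon$, and $f\leq\alpha$ a.e.\ on $\Omega^c$, where $\Omega:=\bigcup_m P_m$ satisfies $|\Omega|\leq\|f\|_1/\alpha$. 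Decompose $f=g+b$ with $g:=f\chi_{\Omega^c}+\sum_m(\fint_{P_m}f)\chi_{P_m}$ the good part and $b=\sum_m b_m$, $b_m:=(f-\fint_{P_m}f)\chi_{P_m}$, mean-zero and supported on $P_m$.

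For the good part, $\|g\|_\infty\leq\alpha/\epsilon$ and $\|g\|_1\leq\|f\|_1$ give $\|g\|_2^2\leq\alpha\|f\|_1/\epsilon$. By \ref{eltwo}, $\|B^*g\|_2\leq\gamma\|g\|_2$, so Chebyshev produces $|\{|B^*g|>\alpha/2\}|\leq C\|f\|_1/\alpha$ with no $k$-dependence.

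For the bad part, since $Q$ and $P_m$ both live in $S$, dyadic nesting forces $\int_Q b_m=0$ unless $Q\subsetneq P_m$, so $B^*b_m=\sum_{Q\in S'(Q_0),Q\subsetneq P_m}\frac{\int_Q b_m}{|Q^*|}\chi_{Q^*}$. Introduce the enlargement $\tilde\Omega:=\bigcup_m\Lambda P_m$, where $\Lambda$ is a fixed absolute dilation factor (in the sense of \ref{dylation}) chosen below; by the doubling property $|\tilde\Omega|\leq C'|\Omega|\leq C'\|f\|_1/\alpha$. The key geometric step is that for an absolute constant $C_0$ depending only on $\delta$, $C$, $D$ and the quasi-triangle constant, whenever $k(Q)-k(P_m)\geq C_0k$ one has $Q^*\subset\Lambda P_m$. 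Indeed, by \ref{mei2}, $\mathrm{diam}(Q^*)\leq DC\cdot 2^k\delta^{k(Q)}\leq DC\,(2\delta^{C_0})^k\delta^{k(P_m)}$; choosing $C_0$ so that $2\delta^{C_0}\leq 1$ makes $\mathrm{diam}(Q^*)\leq DC\,\delta^{k(P_m)}$. Since $x_c(Q)\in Q\subset P_m\subset B(x_c(P_m),C\delta^{k(P_m)})$ by \ref{p5}, the quasi-triangle inequality places every point of $Q^*$ within an absolute multiple of $\delta^{k(P_m)}$ of $x_c(P_m)$, so $Q^*\subset\Lambda P_m$ for an absolute $\Lambda$. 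Such ``deep'' $Q$ therefore contribute nothing to $\|B^*b\,\chi_{\tilde\Omega^c}\|_1$.

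Only the top $O(k)$ generations remain. At each $S$-level $k(P_m)+\ell$ with $1\leq\ell\leq C_0k$, the $Q$'s at that level contained in $P_m$ are pairwise disjoint, so $\sum_Q|Q|\leq|P_m|$ and $\sum_Q\int_Q f\leq\int_{P_m}f$. The crude bound $|\int_Q b_m|\leq\int_Q f+\fint_{P_m}f\cdot|Q|$ then yields
\[
\sum_{Q}\int_{\tilde\Omega^c}\frac{|\int_Q b_m|}{|Q^*|}\chi_{Q^*}\leq\sum_{Q}\bigl|{\textstyle\int_Q b_m}\bigr|\leq 2C_0k\int_{P_m}f,
\]
so summing over $m$ gives $\|B^*b\,\chi_{\tilde\Omega^c}\|_1\leq 2C_0k\|f\|_1$. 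Chebyshev on $\tilde\Omega^c$ and the bound on $|\tilde\Omega|$ combine to give $|\{|B^*b|>\alpha/2\}|\leq\beta' k\|f\|_1/\alpha$; adding the good-part estimate proves the lemma, uniformly in $j$. The main obstacle is precisely the geometric containment $Q^*\subset\Lambda P_m$ for deep $Q$: because $Q^*\in S^j$ and $P_m\in S$ belong to different grids, dyadic nesting between them is unavailable, and one has to argue entirely through the ball inclusion \ref{p5}, the diameter bound \ref{mei2}, and the quasi-triangle inequality of \ref{homogeneous} to produce the absolute factor $\Lambda$; once this is in hand, the linear-in-$k$ count of generations and the sparse/disjointness bookkeeping are routine.
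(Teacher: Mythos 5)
Your proof is correct, and it gives the same headline idea as the paper (only $O(k)$ dyadic generations below each stopping cube contribute, each contributing $O(\|f\|_1)$), but it implements it with a genuinely different Calder\'on--Zygmund decomposition and a different geometric device. The paper works from a Whitney-type covering of $\Omega=\{Mf>a\}$: it picks cubes $W_i\in S$ inside $\Omega$ that are maximal with $\mathrm{diam}(W_i)\leq \mathrm{dist}(W_i,\Omega^c)$, and the level restriction $k(Q)\leq k(W_i)+D_4 k+D_5$ then drops out \emph{pointwise} for $x\in\Omega^c$: if $x\in Q^*$ and $Q\subsetneq W_i$, then $\mathrm{diam}(W_i)\leq\mathrm{dist}(W_i,\Omega^c)\leq\rho(x_c(Q),x)\leq\mathrm{diam}(Q^*)\lesssim 2^kC\delta^{k(Q)}$ by \eqref{mei2}, with no dilated exceptional set needed. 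You instead run the standard dyadic CZ stopping time on averages of $f$ (with the bound $\fint_{P_m}f\leq\alpha/\epsilon$ from \eqref{p4}), introduce the dilated set $\tilde\Omega=\bigcup_m\Lambda P_m$, and show that ``deep'' cubes (those with $k(Q)-k(P_m)\gtrsim k$) have $Q^*$ swallowed by $\Lambda P_m$, so they simply vanish from $\|B^*b\,\chi_{\tilde\Omega^c}\|_1$; the linear-in-$k$ count then comes from the remaining $O(k)$ shallow levels and disjointness within each level. Your route is the more classical CZ argument (exceptional-set enlargement via a ball dilation in the sense of \ref{dylation}), and it correctly uses \eqref{p5}, \eqref{mei2} and the quasi-triangle inequality to produce the absolute factor $\Lambda$ despite $Q\in S$ and $Q^*\in S^j$ living in different grids; the paper's Whitney trick avoids the dilated set entirely but requires the extra Step~3 argument (parent cases) to control $\|g\|_\infty$. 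Both are valid, with comparable effort. One cosmetic remark: you overload the symbol $C_0$, which the paper already reserves for the quasi-triangle constant in Definition~\ref{homogeneous}; rename your ``deepness'' constant to avoid the clash.
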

\begin{proof}
\textbf{Step 1.}
\newline
The set $\Omega=\lbrace (Mf)(x)>a \rbrace$ is open and bounded (since $|Q|<\infty$). Denote by $\lbrace W_i\rbrace_{i\in I}$ the family of elements $W_i$ of the dyadic decomposition $S$ who lie in the set $\Omega$ and who are maximal with respect to the following property $diam(W_i)\leq dist(W_i,\Omega^c)$. The sets $W_i$ are disjoint by maximality. Additionally $\Omega=\cup_i W_i$.
\newline

Indeed, for a point $x\in \Omega$ consider the sets $Q_k(x)$ introduced in \ref{localized}. We have that $diam(Q_k(x))\rightarrow 0$ and by quasitriangle inequality $dist(Q_i(x),\Omega^c)\geq dist(x,\Omega^c)/C_0$ as $k\rightarrow+\infty$. Thus $Q_k(x)\subset \Omega$ and $diam(Q_k(x))<dist(Q_k(x),\Omega^c)$ for a sufficiently large $k\in Z$.
\newline
\textbf{Step 2.}
\newline
Set
\begin{equation*}
b_i=\left(f-\fint_{W_i} f\right)  \chi_{W_i}, \quad b=\sum_{i\in I} b_i, \quad g=f-b
\end{equation*}
For any $a>0$ we have:
\begin{equation}\label{standard}
|\lbrace x\colon |(B^*f)(x)|>a\rbrace|\leq |\Omega|+|\lbrace x\colon |(B^*g)(x)|>a/2\rbrace|+|\lbrace x\in \Omega^c \colon  |(B^*b)(x)|>a/2\rbrace|.
\end{equation}
In order to prove \ref{discreteoperatornorm} we estimate the terms on the right of \ref{standard}. For the first term we use that the maximal function is weak $L_1$, thus:
\begin{equation*}
|\Omega|\leq \frac{D_2}{a}||f||_1,
\end{equation*}
for an absolute constant $D_2$.
\newline
\textbf{Step 3.}
\newline
For the second term we first note that 
\begin{equation}\label{giselone}
g(x)\leq D_3\cdot a
\end{equation}
 for some absolute constant $D_3>0$. Indeed, if $x\in \Omega^c$ then $|g(x)|=|f(x)|\leq a$. 
 \newline
 Now if $x\in \Omega$ then
$x\in W_i$ for some $i\in I$. Denote by $W_i^p$  the parent of $W_i$ in the dyadic decomposition $S$. Two cases are possible:
\newline
\textbf{Case 1.}
$
diam(W_i^p)>dist(W_i^p,\Omega^c).
\newline
$\textbf{Case 2.}
$W_i^p$  contains a point in $\Omega^c$.
\newline
Consider case 1. There exist $y\in \Omega^c$ and $x_1\in W_i^p$ so that
\begin{equation*}
diam(W_i^p)>\rho(x_1,y).
\end{equation*}
Let $x_2$ be an arbitrary point in $W_i$. By quasitriangle inequality:
\begin{equation}\label{yisnotfar}
\rho(x_2,y)\leq C_0(\rho(x_2,x_1)+\rho(x_1,y))< 2C_0 diam(W_i^p)
\end{equation}
Hence
\begin{equation}\label{b2}
W_i\subset B(y,2C_0diam(W_i^p))=:B
\end{equation}
But  
 $y\in \Omega^c$ so that $(Mf)(y)\leq a$.
 Hence, we can estimate:
 \begin{equation*}
|g(x)|=\left |\fint_{W_i} f \right|\leq \frac{1}{|W_i|}\int_{B}|f|\leq \frac{|B|}{|W_i|}\fint_{B}|f|\leq \end{equation*}\begin{equation*}\leq    \frac{|B|}{|W_i|}    (Mf)(y)\leq     \frac{|B|}{|W_i|}    a
\end{equation*}
In order to estimate the fraction $|B|/|W_i|$, take $x_2$ to be $x_c(W_i)$ in \ref{yisnotfar} to claim:
\begin{equation*}
\rho(x_c(W_i),y)<2C_0diam(W_i^p)
\end{equation*}
which together with \ref{b2} implies that
\[\rho(x_c(W_i^p),2C_0diam(W_i^p))\leq C_0(\rho(y,x_c)+\rho(y,2C_0diam(W_i^p)))\leq 2C_0(2C_0diam(W_i^p)\]
so it follows that
\begin{equation*}
B\subset B(x_c(W_i),4C_0^2 diam(W_i^p)).
\end{equation*}
So that we can estimate.\begin{equation*}
\frac{| B|}{|W_i|   }\leq \frac{|B(x_c(W_i),4C_0^2 diam(W_i^p))|}{|W_i|}\end{equation*}
But due to  \ref{p5}  that quantity
 is bounded by an absolute constant by the doubling property. 
\newline
Note that in case 2 it is still true that for a certain $y\in \Omega^c$ we have $(Mf)(y)\leq a$ and $W_i\subset B$ so that case 2 can be treated similarly.
\newline
Now we are ready to estimate the second term of \ref{standard}. Use \ref{eltwo} and \ref{giselone} to write:
\begin{equation*}
|\lbrace x\colon |(B^*g)(x)|>a/2|\leq \frac{4}{a^2}||B^*(g)||_2^2\leq \frac{4\gamma}{a^2}||g||_2^2\leq \frac{4\gamma D_3}{a}||g||_1
\leq \frac{4\gamma D_3}{a}||f||_1\end{equation*}
\textbf{Step 4.}
In order to estimate the third term of \ref{standard} fix $x\in \Omega^c$ and consider:
\begin{equation}
\label{Bstarsum}
(B^* b)(x)=\sum_{i\in l}\sum_{Q}       \left(  \frac{1}{|Q^*|}    \int_Q b_i \right) \cdot   \chi_{Q^*}(x) 
\end{equation}
Assume that for certain $Q$ and $W_i$ we have that: 
\begin{equation*}
 \left( \int_Q b_i \right)\cdot \chi_{Q^*}(x)\neq 0
\end{equation*}
Then $Q\cap W_i\neq\emptyset$. But also $W_i\not\subset Q$ as $\int_{W_i}b_i=0$ by definition. Finally both $W_i$ and $Q$ belong to the same dyadic decomposition $S$. Thus we must have that $Q\subset W_i$. Also $\chi_{Q^*}(x)\neq 0$ implies $x\in Q^*$. Using all of these and \ref{mei1}, \ref{mei2} we can estimate:
\begin{equation}\label{estone}
diam(W_i)\leq dist(W_i, \Omega^c)\leq dist(x_c(Q),x)\leq diam(Q^*)\leq D\cdot 2^{k}C\delta^{k(Q)}\leq 2^{k}\cdot DC\cdot diam(Q)
\end{equation}
Recall that by \ref{level} $k(W_i)$ denotes the only integer for whom $W_i\in S_{k(W_i)}$. We have:
\begin{equation}\label{esttwo}
diam(W_i)\geq  \delta^{k(W_i)}\leq 2^k\cdot DCdiam(Q)\leq 2^k\cdot DC\cdot 2C_0C \delta^{k(Q)}
\end{equation}
We have that for some absolute constants $D_4,D_5>0$,
\begin{equation*}
k(W_i)\leq k(Q)<k(W_i)+D_4\cdot k+D_5
\end{equation*}
where the left hand side follows from $Q\subset W_i$ and right hand side follows from plugging \ref{esttwo} into \ref{estone}. 
Thus, for our fixed $x\in \Omega^c$ we can write
\begin{equation*}
(B^* b)(x)\leq \sum_{i\in l}\sum_{Q\;\colon\; k(W_i)\leq k(Q)\leq D_4 k+k(W_i)+D_5}       \left(  \frac{1}{|Q^*|}    \int_Q |b_i| \right) \cdot   \chi_{Q^*}(x), 
\end{equation*}
but the sets $Q$ for which the number $k(Q)$ is the same are disjoint, thus
\begin{equation*}
\int_{\Omega^c}|(B^*b)(x)|\leq
\sum_{i\in I} \sum_{Q\;\colon\; k(W_i)\leq k(Q)\leq D_4 k+k(W_i)+D_5} \left( \int_Q |b_l|\right)\leq 
\end{equation*}
\begin{equation*}
\sum_{i\in I} (D_4 k+D_5) \int_{W_l} |b_l| \leq  2(D_4 k+D_5)||f||_1
\end{equation*}
So that we can estimate the third term in \ref{standard} as follows:
\begin{equation*}
|\lbrace x\in \Omega^c \colon  |(B^*b)(x)|>a/2\rbrace|\leq \frac{2}{a}|| B^*b ||_{L_1(\Omega^c)}\leq  \frac{2(D_4+D_5)k||f||_1}{a}
\end{equation*}
\end{proof}
\begin{lemma}\label{biw}
For any $Q_1\in S^j$ we have that:
\begin{equation*}
\omega_{\epsilon/4} (|B^*(f)-m(B^*(f))|,Q_1)\leq D_6 k\fint_{Q_1}|f|,
\end{equation*}
where $\epsilon$ is the constant provided by \ref{dyadic}, $B=B_{k,j}$ and $D_{6}$ is an absolute constant.
\end{lemma}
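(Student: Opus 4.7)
The plan is to isolate from $B^*(f)$ a piece that is constant on $Q_1$ and then apply the weak $(1,1)$ estimate of Lemma \ref{discreteoperatornorm} to what remains. The key structural fact to exploit is that every $Q^*$ appearing in the definition of $B^*$ and the cube $Q_1$ itself belong to the \emph{same} dyadic decomposition $S^j$, so by \eqref{p2} any two of them are either nested or disjoint.

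First I would split $f=f\chi_{Q_1}+f\chi_{X\setminus Q_1}$. For $x\in Q_1$, only those terms in $B^*(f\chi_{X\setminus Q_1})(x)$ with $Q^*\cap Q_1\neq\emptyset$ survive; and whenever $Q^*\subsetneq Q_1$ the containment $Q\subset Q^*$ (built into \eqref{mei1}) forces $Q\subset Q_1$, whence $\int_Q f\chi_{X\setminus Q_1}=0$. Hence only scales with $Q_1\subseteq Q^*$ contribute, yielding
\[
B^*(f\chi_{X\setminus Q_1})(x) \;=\; \sum_{Q\in S'(Q_0):\, Q_1\subseteq Q^*} \frac{1}{|Q^*|}\int_{Q\setminus Q_1} f \;=:\; c,
\]
which is independent of $x\in Q_1$. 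By linearity, $B^*(f)(x)=B^*(f\chi_{Q_1})(x)+c$ for a.e.\ $x\in Q_1$.

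Next, since $\epsilon/4<1/2$, I would invoke Lemma \ref{changingconstantw} with this constant $c$ to get
\[
\omega_{\epsilon/4}(|B^*(f)-m(B^*(f),Q_1)|,Q_1) \;\leq\; 2\,\omega_{\epsilon/4}(|B^*(f\chi_{Q_1})|,Q_1),
\]
and then combine Lemma \ref{wtoweak} with the weak $(1,1)$ bound of Lemma \ref{discreteoperatornorm} applied to $f\chi_{Q_1}\in L^1(X)$:
\[
\omega_{\epsilon/4}(|B^*(f\chi_{Q_1})|,Q_1) \;\leq\; \frac{4\,\|B^*(f\chi_{Q_1})\|_{1,\infty}}{\epsilon |Q_1|} \;\leq\; \frac{4\beta k}{\epsilon}\fint_{Q_1}|f|.
\]
Chaining these inequalities yields the desired bound with $D_6=8\beta/\epsilon$.

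The crux is the first step: verifying that $B^*(f\chi_{X\setminus Q_1})$ is genuinely constant on $Q_1$. This hinges essentially on Mei's adapted grid---because $Q^*$ lives in the same grid $S^j$ as $Q_1$, the only alternatives are $Q_1\subseteq Q^*$ (contribution constant on $Q_1$) or $Q^*\subseteq Q_1$ (which, via $2^k Q\subset Q^*$, forces $Q\subset Q_1$ so that $f\chi_{X\setminus Q_1}$ vanishes on $Q$). Without this dichotomy one would leak an error term from in-between scales and be forced into a tail decomposition of the type in Lemma \ref{linearization} to recover the linear dependence on $k$; thanks to the dyadic nesting, none of that is needed here.
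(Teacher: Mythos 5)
Your argument is correct and is essentially the proof in the paper: both exploit that $Q_1$ and all the $Q^*$ lie in the same Mei grid $S^j$ (hence are nested), isolate a piece of $B^*(f)$ that is constant on $Q_1$, use Lemma~\ref{changingconstantw} to trade the median for that constant, and then finish with Lemma~\ref{wtoweak} together with the weak $(1,1)$ estimate of Lemma~\ref{discreteoperatornorm}. Your organization---splitting $f=f\chi_{Q_1}+f\chi_{X\setminus Q_1}$ rather than splitting the sum over cubes by the position of $Q^*$---is in fact slightly cleaner: since the cubes $Q\in S'(Q_0)$ live in the original grid $S$ and not in $S^j$, a cube $Q$ with $Q_1\subsetneq Q^*$ may straddle $Q_1$, so the paper's stated equality of the remaining sum with $B^*(f\chi_{Q_1})$ on $Q_1$ holds only up to an additional constant (or, after passing to $|f|$, as a one-sided inequality), whereas with your choice $c=B^*(f\chi_{X\setminus Q_1})$ the identity $B^*(f)-c=B^*(f\chi_{Q_1})$ holds exactly on $Q_1$. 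You also retain the factor $2$ from Lemma~\ref{changingconstantw} that the paper's display silently drops; both discrepancies are cosmetic and do not affect the final bound.
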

\begin{proof}
Note that the function
\begin{equation*}
\sum_{Q\colon Q\in S^{\prime}(Q_0), Q_1\subsetneq Q^* } \left(\frac{1}{|Q^*|} \int_Q f\right) \chi_{Q^*}(x)
\end{equation*}
is constant on $Q_1$ as both $Q_1$ and the sets $Q^*$ involved in the sum are in $S^j$. Thus, using \ref{changingconstantw} we can write:
\begin{eqnarray*}
\omega_{\epsilon/4} (|B^*(f)-m(B^*(f))|,Q_1)&\leq &\omega_{\epsilon/4} \left(\left|B^*(f)-    \sum_{Q\colon Q\in S^{\prime}(Q_0), Q_1\subsetneq Q^* } \left(\frac{1}{|Q^*|} \int_Q f\right) \chi_{Q^*}(x)
    \right|,Q_1\right)\\&=&
    \omega_{\epsilon/4} \left(\left|\sum_{Q\colon Q\in S^{\prime}(Q_0),Q^*\subset Q_1 } \left(\frac{1}{|Q^*|} \int_Q f\right) \chi_{Q^*}(x)
    \right|,Q_1\right)\\&=&\omega_{\epsilon/4}\left( | B^*(f\cdot \chi_{Q_1})| ,Q_1\right)\leq \omega_{\epsilon/4}\left(  B^*(|f|\cdot \chi_{Q_1}) ,Q_1\right)
        \end{eqnarray*}
        Further by \ref{wtoweak} and \ref{discreteoperatornorm} we have
        \begin{equation*}
        \omega_{\epsilon/4}\left(  B^*(|f|\cdot \chi_{Q_1}) ,Q_1\right)\leq \frac{4}{\epsilon |Q_1|} ||B^*(|f|\cdot \chi_{Q_1})||_{L_{1,\infty}(Q_1)}\leq \frac{4}{\epsilon |Q_1|} \beta k ||f||_{L_1(Q_1)}\leq     \frac{4\beta}{\epsilon} k \fint_{Q_1} |f|   \end{equation*}
        \end{proof}

\begin{lemma}\label{bistar}
\begin{equation*}
||B^*(f)||_{L_2(w)}\leq D_7\cdot k [\omega]_{A_2} ||f||_{L_2(\omega)},
\end{equation*}
where $B=B_{k,j}$ and $D_7$ is an absolute constant.
\end{lemma}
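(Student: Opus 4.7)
The plan is to apply the maximal decomposition of Remark \ref{maximaldecomposition} to the function $B^{*}(f)$, but using the dyadic decomposition $S^j$ (on whose cubes the oscillation estimate of Lemma \ref{biw} is stated) rather than the original $S$. Fix a large $Q_0 \in S^j$. The decomposition produces a sparse family $\mathcal{S} \subset S^j$ (with $|E(Q)| \geq |Q|/2$ for every $Q \in \mathcal{S}$) together with the pointwise bound
\[
|B^{*}f - m(B^{*}f,Q_0)| \leq M^{\#}_{\ee/4,Q_0}(B^{*}f) + \sum_{Q \in \mathcal{S}} \omega_{\ee/4}(|B^{*}f - m(B^{*}f,Q)|,Q)\,\chi_Q.
\]
Applying Lemma \ref{biw} to every $Q \in \mathcal{S}$ controls each summand by $D_6 k \fint_Q |f|$ and the sharp maximal function by a constant multiple of $D_6 k M(f)$, yielding the pointwise estimate
\[
|B^{*}f(x) - m(B^{*}f,Q_0)| \leq D_6 k \Bigl(M(f)(x) + \sum_{Q \in \mathcal{S}} \fint_Q |f|\,\chi_Q(x)\Bigr).
\]

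I would next pass to $L^2(\omega,Q_0)$-norms. The Hardy--Littlewood piece is handled by Buckley's theorem in the homogeneous setting, giving $\|M(f)\|_{L^2(\omega)} \leq C[\omega]_{A_2}\|f\|_{L^2(\omega)}$. The median $m(B^{*}f,Q_0)$ is treated exactly as in \eqref{lernermissing}: Lemma \ref{wtoweak} together with the weak-type bound from Lemma \ref{discreteoperatornorm} and an $A_2$ application yield a contribution bounded by $Ck[\omega]_{A_2}\|f\|_{L^2(\omega)}$, with the desired linear $k$ and $[\omega]_{A_2}$ dependence.

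The crux, and the main obstacle, is the $L^2(\omega)$-bound for the sparse operator $T_{\mathcal{S}}f = \sum_{Q \in \mathcal{S}} (\fint_Q |f|)\chi_Q$ with \emph{linear} dependence on $[\omega]_{A_2}$. By duality it suffices to estimate $\sum_Q |Q|\langle f\rangle_Q\langle g\omega\rangle_Q$ against $\|f\|_{L^2(\omega)}\|g\|_{L^2(\omega)}$. Setting $\sigma = \omega^{-1}$ and introducing weighted averages $\langle h\rangle_Q^{\sigma} = \sigma(Q)^{-1}\int_Q h\,d\sigma$, a direct computation gives
\[
|Q|\langle f\rangle_Q\langle g\omega\rangle_Q = \frac{\omega(Q)\sigma(Q)}{|Q|}\,\langle f\omega\rangle_Q^{\sigma}\,\langle g\rangle_Q^{\omega} \leq D_0[\omega]_{A_2}\,|Q|\,\langle f\omega\rangle_Q^{\sigma}\,\langle g\rangle_Q^{\omega},
\]
where the inequality uses Remark \ref{a2dyadic}. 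Combining the sparseness $|Q|\leq 2|E(Q)|$ with disjointness of the $E(Q)$ yields
\[
\sum_Q |Q|\langle f\rangle_Q\langle g\omega\rangle_Q \leq 2D_0[\omega]_{A_2}\int_X M^{\sigma}(f\omega)\cdot M^{\omega}(g)\,dx,
\]
where $M^{\sigma},M^{\omega}$ are the weighted dyadic maximal operators adapted to $S^j$. A final Cauchy--Schwarz exploiting $\sigma\omega=1$ splits the right-hand side into $\|M^{\sigma}(f\omega)\|_{L^2(\sigma)}\|M^{\omega}(g)\|_{L^2(\omega)}$, and the universal $L^2(\sigma)\to L^2(\sigma)$ and $L^2(\omega)\to L^2(\omega)$ boundedness of these Doob-type maximal operators delivers $\leq C[\omega]_{A_2}\|f\|_{L^2(\omega)}\|g\|_{L^2(\omega)}$.

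The delicate point is the correct execution of the $A_2$ trick together with the weight-independent boundedness of the dyadic weighted maximal operators, which is why we must work with the dyadic family $S^j$ rather than balls throughout. Combining the three contributions above and letting $Q_0\uparrow X$ then produces the claimed bound $\|B^{*}f\|_{L^2(\omega)}\leq D_7 k[\omega]_{A_2}\|f\|_{L^2(\omega)}$.
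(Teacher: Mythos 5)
Your overall plan matches the paper's proof of Lemma \ref{bistar}: apply the maximal decomposition of Remark \ref{maximaldecomposition} to $B^{*}f$ on a suitably large cube of $S^{j}$ (what the paper calls $Q_2$), control the oscillations by Lemma \ref{biw}, and then handle the median term, the maximal-function term, and the sparse sum separately; your dualization of the sparse sum, with the $A_2$ trick, the sparseness $|Q|\leq 2|E(Q)|$, disjointness of the $E(Q)$, and the weight-independent $L^2$-boundedness of the weighted dyadic maximal operators, is exactly the paper's Lemma \ref{lernerhasit}.

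The one place where your sketch papers over a real step is the median term. You say $m(B^{*}f,Q_0)$ is ``treated exactly as in \eqref{lernermissing},'' but that computation passes through the chain $|m(B^{*}f,Q_0)|\leq\|B^{*}f\|_{L_{1,\infty}(Q_0)}/|Q_0|\leq\beta k\|f\|_{L_1(Q_0)}/|Q_0|$, and the second inequality is not automatic: Lemma \ref{discreteoperatornorm} controls $\|B^{*}f\|_{1,\infty}$ by $\|f\|_{L_1(X)}$, and a priori $B^{*}f$ restricted to $Q_0$ could pick up averages of $f$ over cubes not contained in $Q_0$. If that happened, the ensuing H\"older/$A_2$ step would need $\bigl(\int_X\omega^{-1}\bigr)^{1/2}$ rather than $\bigl(\int_{Q_0}\omega^{-1}\bigr)^{1/2}$, and the $[\omega]_{A_2}$ factor would not appear. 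The missing argument is precisely Lemma \ref{howmedianshouldbetreated} of the paper: choosing $Q_2\in S^{j}$ with $\mathrm{diam}(Q_2)>D\cdot2^{k}C\delta^{k(Q_0)}$ (for the $Q_0\in S$ fixed in \eqref{bidefinition}) guarantees, via \eqref{mei2}, that $\mathrm{diam}(Q^{*})<\mathrm{diam}(Q_2)$ for every $Q^{*}$ occurring in $B^{*}$; since $Q^{*},Q_2\in S^{j}$ are dyadically comparable, this forces every $Q^{*}$ meeting $Q_2$ to satisfy $Q^{*}\subset Q_2$, hence $Q\subset Q^{*}\subset Q_2$, so that $B^{*}f=B^{*}(f\chi_{Q_2})$ on $Q_2$ and in particular $m(B^{*}f,Q_2)=m(B^{*}(f\chi_{Q_2}),Q_2)$. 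Only then does Lemma \ref{discreteoperatornorm} give the bound by $\beta k\|f\|_{L_1(Q_2)}$. You gesture at this by writing ``fix a large $Q_0\in S^{j}$,'' but you should make the locality argument explicit; the paper singles it out as a step that is implicit but not stated in Lerner's Euclidean proof.
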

\begin{proof}
Let $Q_2\in S^j$ be a set satisfying:
\begin{equation*}
diam(Q_2)>D\cdot 2^kC\delta^{k(Q_0)},
\end{equation*}
where $Q_0$ is the set involved in the definition of  $B$ (see \ref{bidefinition}). (This can always be achieved by taking $Q_2$ from a sufficiently high level of the dyadic decomposition $S_j$). Then for any $Q^*$ from \ref{bidefinition} using \ref{mei2} we have that $diam(Q^*)< diam(Q_2)$ and hence
\begin{equation}\label{hahahaempty}
Q_2\not\subset Q^*
\end{equation}
 Apply the maximal decomposition \ref{maximaldecomposition}  for the function $B^*(f)$ taking the $Q_0$ of \ref{maximaldecomposition} to be $Q_2$ and apply the estimates obtained in \ref{biw} to estimate:
\begin{equation*}
|(B^*f)(x)|\leq |m(B^*f,Q_2)|+D_6\cdot k \cdot(Mf)(x)+D_6\cdot  k \cdot\sum_{Q_1\in S^j(Q_2)}\fint_{Q_1}|f|\quad \text{ for a.e. }x\in Q_2 
\end{equation*}
And thus:
\begin{equation}\label{finaldecomp}
||B^*f||_{L_2(\omega,Q_2)}\leq ||m(B^*f,Q_2)||_{L_2(w,Q_2)}+D_6\cdot k \cdot||Mf||_{L_2(\omega,Q_2)}
\end{equation}
\begin{equation*}+D_6\cdot  k \cdot  \left|\left|\sum_{Q_1\in S^j(Q_2)}\fint_{Q_1}|f|\chi_{Q_1}\right|\right|_{L_2(\omega,Q_2)}
\end{equation*}
The first term of \ref{finaldecomp}  will be estimated in \ref{howmedianshouldbetreated} and the second term in \ref{lernerhasit}, thus finishing the proof of \ref{bistar}.
\begin{lemma}\label{howmedianshouldbetreated}
\begin{equation*}
||m(B^* f,Q_2)||_{L_2(w,Q_2)}\leq \beta k ||f||_{L_2(\omega,Q_2)} [\omega]_{A_2},
\end{equation*}
\end{lemma}
\begin{proof}
Decompose:
\begin{equation*}
B^* f=\sum_{Q\in S^{\prime}(Q_0), Q^*\subset Q_2}\frac{1}{|Q^*|}\int_{Q}f \cdot\chi_{Q^*}+\sum_{Q\in S^{\prime}(Q_0), Q_2\subsetneq Q^*}\frac{1}{|Q^*|}\int_{Q}f \cdot\chi_{Q^*}+\sum_{Q\in S^{\prime}(Q_0), Q^*\cap Q_2=\emptyset}\frac{1}{|Q^*|}\int_{Q}f \cdot\chi_{Q^*}.
\end{equation*}
Here the third term will be identically zero on $Q_2$ and the second term is zero due to \ref{hahahaempty}. Thus for $m(B^*f,Q_2)$ only the first term matters. Using $Q\subset Q^*$ we get:
\begin{equation*}
m(B^*f, Q_2)=m(B^*(f\cdot\chi_{Q_2}),Q_2)
\end{equation*}
By \ref{wtoweak} and \ref{discreteoperatornorm}:
\begin{equation*}
|m(B^*(f\cdot \chi_{Q_2}),Q_2)|\leq \frac{||B^*( f\cdot \chi_{Q_2})||_{L_{1,\infty}(Q_2)}}{|Q_2|}\leq \beta k \frac{||f||_{L_1(Q_2)}}{|Q_2|}
\end{equation*}
so that applying Holder's inequality and \ref{a2dyadic} we get
\begin{equation*}
||m(B^* f,Q_2)||_{L_2(w,Q_2)}\leq \beta k \frac{||f||_{L_{1}(Q_2)}}{|Q_2|}\left(\int_{Q_2}\omega\right)^{1/2}\leq \beta k ||f||_{L_2(\omega,Q_2)} [\omega]_{A_2}.
\end{equation*}
\end{proof}
\begin{lemma}
\label{lernerhasit}
\begin{equation*}
 \left|\left|\sum_{Q_1\in S^j(Q_2)}\left(\fint_{Q_1}|f| \right)\chi_{Q_1}(x)\right|\right|_{L_2(\omega,Q_2)}\leq D_7 [\omega]_{A_2}
\end{equation*}
\end{lemma}
We first note that for any function $g\in L_2(\omega^{-1})$,
\begin{equation*}
\left|\int_X \sum_{Q_1\in S^j(Q_2)} \fint |f|\chi_{Q_1}(x) \cdot g(x)dx\right|\leq 
\sum_{Q_1\in S^j(Q_2)}\fint_{Q_1}|f|\fint_{Q_1}|g|\cdot |Q_1|\leq 2\sum_{Q_1\in S^j(Q_2)}\fint_{Q_1}|f|\fint_{Q_1}|g|\cdot |E(Q_1)|
\end{equation*}
where for every $Q_1\in S^j(Q_2)$ we denoted \begin{equation*}E(Q_1)=Q_1\setminus \bigcup_{Q\in S(Q_2), Q \subsetneq Q_1}Q\end{equation*} and used \ref{half}.  Now let $\omega(Q) = \int_Q\omega$ and $M_{\omega^{-1}}(f) = \sup_Q\frac{1}{\omega(Q)}\int |f\omega|\omega^{-1}$.  We further estimate:
\begin{eqnarray*}
\sum_{Q_1\in S^j(Q_2)}\left( \fint_{Q_1}|f|\right)\left(\fint_{Q_1}|g| \right)|E(Q_1)|&\leq &
[\omega]_{A_2}\sum_{Q_1\in S^j(Q_2)}\left( \frac{1}{\omega^{-1}(Q_1)}\int_{Q_1}|f|\right) \left(\frac{1}{\omega(Q)}\int_{Q_1}|g| \right) |E(Q_1)|
\\&\leq & 
[\omega]_{A_2}\sum_{Q_1\in S^j(Q_2)} \int_{E(Q_1)} M_{\omega^{-1}}(f\omega)M_{\omega}(g \omega^{-1})
\\&\leq &
[\omega]_{A_2} \int_X M_{\omega^{-1}}(f\omega)M_{\omega}(g \omega^{-1})
\\&\leq &
[\omega]_{A_2} || M_{\omega^{-1}}(f\omega)||_{L_2(\omega^{-1})}||M_{\omega}(g\omega^{-1})||_{L_2(\omega)}
\\&\leq &
D_7[\omega]_{A_2} ||f||_{L_2(\omega)} ||g||_{L_2(\omega^{-1})}
\end{eqnarray*}
Where we use the fact that $M_{\omega}$ is bounded on $L^2(w)$ indepedent of $[\omega]_{A_2}$. 
Taking the suprenum over $||g||_{L^2(w^{-1})} = 1$,
\begin{equation*}
 \left|\left|\sum_{Q_1\in S^j(Q_2)}\left(\fint_{Q_1}|f| \right)\chi_{Q_1}(x)\right|\right|_{L_2(\omega,Q_2)}= \left|\left|\sum_{Q_1\in S^j(Q_2)}\left(\fint_{Q_1}|f| \right)\chi_{Q_1}(x)\right|\right|_{L_2(\omega)}\leq D_7 [\omega]_{A_2}
\end{equation*}
\end{proof}

\begin{lemma}\label{biitself}
\begin{equation*}
||B(f)||_{L_2(\omega)}\leq D_7 \cdot k [\omega]_{A_2} ||f||_{L_2(\omega)},
\end{equation*}
for $B=B_{k,j}$.
\end{lemma}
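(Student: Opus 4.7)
The plan is to deduce this from Lemma \ref{bistar} by a duality argument, using the symmetry of the $A_2$ characteristic under $\omega\leftrightarrow\omega^{-1}$. First I would verify that $B$ and $B^*$ are genuine formal adjoints with respect to the unweighted pairing $\int fg\,d\mu$: directly from the definitions of $B$ and $B^*$ in \ref{bidefinition} and just after,
\begin{equation*}
\int B(f)\cdot g\,d\mu = \sum_{Q\in S^{\prime}(Q_0)} \frac{1}{|Q^*|}\left(\int_{Q^*} f\right)\left(\int_{Q} g\right) = \int f\cdot B^*(g)\,d\mu,
\end{equation*}
since the roles of $Q$ and $Q^*$ are interchanged in the two expressions.

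Next, I would invoke the standard weighted $L^2$ duality. Since $L_2(\omega^{-1})$ is dual to $L_2(\omega)$ via the unweighted pairing (indeed, Cauchy-Schwarz gives $|\int fg\,d\mu|\leq ||f||_{L_2(\omega)}||g||_{L_2(\omega^{-1})}$, with equality attainable), we can write
\begin{equation*}
||B(f)||_{L_2(\omega)} = \sup_{||g||_{L_2(\omega^{-1})}\leq 1}\left|\int B(f)\cdot g\,d\mu\right| = \sup_{||g||_{L_2(\omega^{-1})}\leq 1}\left|\int f\cdot B^*(g)\,d\mu\right|,
\end{equation*}
and the right-hand side is bounded by $||f||_{L_2(\omega)}\cdot ||B^*||_{L_2(\omega^{-1})\to L_2(\omega^{-1})}$ by Cauchy-Schwarz again. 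Thus it suffices to prove that $B^*$ acts on $L_2(\omega^{-1})$ with operator norm at most $D_7 k[\omega]_{A_2}$.

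Finally, I would observe that the $A_2$ characteristic is symmetric: directly from the definition, $[\omega^{-1}]_{A_2}=[\omega]_{A_2}$, so $\omega^{-1}$ is itself an $A_2$ weight with the same characteristic. Applying Lemma \ref{bistar} with the weight $\omega$ replaced by $\omega^{-1}$, and noting that the constant $D_7$ there is absolute (independent of the weight), yields
\begin{equation*}
||B^*(g)||_{L_2(\omega^{-1})} \leq D_7 k[\omega^{-1}]_{A_2}||g||_{L_2(\omega^{-1})} = D_7 k[\omega]_{A_2}||g||_{L_2(\omega^{-1})}.
\end{equation*}
Combining the two displays above gives the desired bound $||B(f)||_{L_2(\omega)}\leq D_7 k[\omega]_{A_2}||f||_{L_2(\omega)}$. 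There is no real obstacle here beyond the mild conceptual bookkeeping of invoking duality with the dual weight $\omega^{-1}$ rather than $\omega$; the work was already done in Lemma \ref{bistar}.
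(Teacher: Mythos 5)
Your proposal is correct and follows essentially the same route as the paper: dualize against $g\in L_2(\omega^{-1})$, pass to $B^*$, and invoke Lemma \ref{bistar} with the dual weight $\omega^{-1}$. You are a bit more explicit than the paper in checking that $B,B^*$ are adjoints and in noting the symmetry $[\omega^{-1}]_{A_2}=[\omega]_{A_2}$, but the argument is the same.
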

\begin{proof}
For any $g\in L_2(\omega^{-1})$ using \ref{bistar} we have:
\begin{equation*}
\int_X B(f)g=\int_X fB^*(g)=\int_X f\omega^{1/2}B^*(g)\omega^{-1/2}\leq ||f||_{L_2(\omega)} ||B^*g||_{L_2(\omega^{-1})}\leq
D_7 k [\omega^{-1}]_{A_2} ||f||_{L_2(\omega)}||g||_{L_2(\omega^{-1})}
\end{equation*}
so that the claim of the lemma and our main theorem follows.
\end{proof}
\section{Acknowledgements}
The first author is supported by a NSF graduate student fellowship.

\end{document}